\theoremstyle{plain}
\newtheorem{Thm}{Theorem}[section]
\newtheorem{Lem}[Thm]{Lemma}
\newtheorem{Prop}[Thm]{Proposition}
\newtheorem{Cla}[Thm]{Claim}
\theoremstyle{remark}
\newtheorem{Def}[Thm] {Definition}
\newtheorem{Rem}[Thm] {Remark}
\long\def\begcom#1\endcom{}
\newcommand{\length}{\operatorname{\length}}
\def\b{\big}
\def\n{\noindent}
\def\p{\prime}
\def\length{\operatorname{length}}
\def\domination{\operatorname{domination}}
\def\cU{{\mathcal U}}
\def\cM{{\mathcal M}}
\begin{document}

\title[Continuity of  Entropy Map for  Nonuniformly Hyperbolic Systems  ]
      {Continuity of  Entropy Map for  Nonuniformly Hyperbolic Systems}

\author[Liao, Sun, Wang]
{Gang Liao $^1$,  Wenxiang Sun $^{1,2}$, Shirou Wang $^{1}$}

\email{liaogang@math.pku.edu.cn}

\email{sunwx@math.pku.edu.cn}

\email{wangshirou@pku.edu.cn}

\thanks{2000 {\it Mathematics Subject Classification}. 37D25, 37A35, 37C40}

\keywords{Entropy map, upper semi-continuity, dominated splitting,
Pesin set}

\thanks{$^{1}$ School of Mathematical
Sciences,  Peking University, Beijing 100871, China.}

\thanks{$^2$
Sun is supported by National Natural Science Foundation of China (
\# 11231001)}

\date{June, 2014}

 \maketitle



\maketitle

\begin{abstract}
 We prove that entropy map is upper semi-continuous
for $C^{1}$ nonuniformly hyperbolic systems with domination, while
it is not true for $C^{1+\alpha}$ nonuniformly hyperbolic systems in
general. This goes a little against a common intuition that
conclusions are parallel between  $C^{1+\domination}$ systems and
$C^{1+\alpha}$ systems.
\end{abstract}


\section{Introduction}
The  entropy map of a continuous transformation $f$ on a metric
space $M$ is defined by $\mu\rightarrow h_{\mu}(f)$ on the set
$\cM_{inv}(M)$ of all $f$-invariant measures and it is generally not
continuous (see \cite{Mi1}). However, it is still worth our effort to
investigate the upper semi-continuity of entropy map since, for
instance, it implies the existence of invariant measures of maximal
entropy. It has been shown that entropy map is upper semi-continuous
for expansive homeomorphisms of compact metric spaces
\cite{Walters}, and then it is  generalized to entropy expansive maps
\cite{Bowen} as well as asymptotic entropy expansive maps \cite{Mi2}.
In 1989 Newhouse \cite{New1} proved:  (i) for any $C^{\infty}$ maps
the entropy map is upper semi-continuous; (ii) for $C^{1+\alpha}$
 nonuniformly hyperbolic diffeomorphisms the entropy
map, when  restricted on the set of  hyperbolic measures with the
same ``hyperbolic rate", is also upper semi-continuous. In the
present paper,  we remove the assumption on ``hyperbolic rates"  in
\cite{New1} to show that for $C^{1}$ nonuniformly hyperbolic systems
with domination property, the entropy map is upper semi-continuous.

 \begin{Def}\label{def1}
Let $M$ be a compact Riemannian manifold and $f: M\to M$ be a $C^1$
diffeomorphism.  Given $\lambda_{s}, \lambda_{u}\gg\varepsilon>0$,
and for all $k\in \mathbb{N}$, we define
$\Lambda_{k}=\Lambda_{k}(\lambda_{s},\lambda_{u};\varepsilon), k\geq
1$, to be all points $x\in M$ for which there is a splitting
$T_{x}M= E^{s}_{x}\oplus E^{u}_{x}$ with the invariance property
$(D_{x}f^{m})E^{s}_{x}=E^{s}_{f^{m}x}$ and
 $(D_{x}f^{m})E^{u}_{x}=E^{u}_{f^{m}x}$ and satisfying:

 (a) $\|Df^{n}|_{E^{s}_{f^{m}x}}\|\leq e^{\varepsilon k}e^{-(\lambda_{s}-\varepsilon)n}
 e^{\varepsilon |m|}$, $\forall\, m\in{ \mathbb{Z}},\, n\geq1;$

 (b) $\|Df^{-n}|_{E^{u}_{f^{m}x}}\|\leq e^{\varepsilon k}e^{-(\lambda_{u}-\varepsilon)n}e^{\varepsilon |m|}$,
 $\forall\, m\in{ \mathbb{Z}}, \,n\geq1;$

 (c) $tan(ang(E^{s}_{f^{m}x},E^{u}_{f^{m}x}))\geq e^{-\varepsilon k}e^{-\varepsilon|m|}.$

$\Lambda=\Lambda(\lambda_{s},\lambda_{u};\varepsilon)=\bigcup\limits_{k=1}^{+\infty}\Lambda_{k}$
is called a Pesin set.

\end{Def}

Denote by $\cM_{inv}(\Lambda)$ the set of all invariant measures
supported on $\Lambda$, i.e.,
$\mu\in\cM_{inv}(\Lambda)\Leftrightarrow \mu(\Lambda)=1 $. For an
ergodic $f-$invariant measure $\nu$ with non-zero Lyapunov
exponents, we could define a Pesin set associated to it in the
following way. Let $\Omega$ be the Oseledec basin of $\nu$ where all
Lyapunov exponents exist, by Oseledec Theorem \cite{Oseledec}
$\nu(\Omega)=1$. Denote by  $E^{s}$ and $E^{u}$ the direct sum of
the Oseledec splittings with respect to negative and positive
Lyapunov exponents respectively. Let $\lambda_s$ be the norm of the largest
Lyapunov exponent of vectors in $E^{s}$ and $\lambda_u$ be the
smallest one in $E^{u}$ and choose $0<\varepsilon \ll
\min\{\lambda_s, \lambda_u\}.$ Then $\Omega$ is contained in the
Pesin set $\Lambda=\Lambda(\lambda_s, \lambda_u; \varepsilon)$   in
Definition \ref{def1}, i.e.,  $\nu\in \cM_{inv}(\Lambda).$ Observe
that the splitting $T_{x}M=E^{s}(x)\oplus E^{u}(x)$ in Definition
\ref{def1} is not necessary to be continuous with  $x\in \Lambda$,
and the angle between $E^s$ and $E^u$ may approach  to zero along
orbits in $\Lambda$. This discontinuity leads to an obstacle for the
continuity property of entropy map
 on $\mathcal M_{inv}(\Lambda)$. In the present paper,  we make an  assumption
that there is a domination between $E^s$ and $E^u$, which ensures
both continuity of the splittings and the  uniformly bounded angles
below between them. To be precise, a splitting
$T_{x}M=E^{s}(x)\oplus E^{u}(x),\,\,x\in \Lambda$ is dominated if
$\dfrac{\|D_{x}f v\|}{\|D_{x}f u\|}\leq \dfrac{1}{2}$ for any $v\in
E^{s}(x)$ and $u\in E^{u}(x)$ with $\|v\|=1 ,\|u\|=1.$

Here is our main theorem in the paper.

\begin{Thm}\label{thm1}Let $f$ be  a $C^1$ diffeomorphism of a compact Riemannian manifold $M$. Let
$\Lambda=\Lambda(\lambda_{s},\lambda_{u};\varepsilon)$ be a Pesin
set with a dominated splitting  $T_{x}M=E^{s}(x)\oplus E^{u}(x),$
$x\in\Lambda$. Then the entropy map $\mu\mapsto h_{\mu}(f)$ is upper
semi-continuous on $\cM_{inv}(\Lambda)$.
\end{Thm}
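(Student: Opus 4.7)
The strategy is to establish upper semi-continuity through a Newhouse-type argument where $C^1$ regularity plus domination replaces the $C^{1+\alpha}$ hypothesis. The domination is used in two essential places: to produce continuous invariant plaque families for the splitting, and to derive a uniform expansiveness scale on each Pesin block.

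\emph{Step 1 (Plaque families and block expansiveness).} By the domination assumption, the splitting $E^s\oplus E^u$ is continuous on $\Lambda$ with uniformly positive angle and extends continuously to the compact closure $\overline{\Lambda}$. The Hirsch--Pugh--Shub plaque family theorem (valid in $C^1$ under a dominated splitting) supplies continuous families of local stable and unstable plaques $\mathcal{W}^s_\rho(x)$, $\mathcal{W}^u_\rho(x)$ of uniform size $\rho>0$ over $\overline{\Lambda}$. Using the Pesin estimates (a)--(b) in Definition~\ref{def1}, I would show that for each $k$ there exists $\varepsilon_k>0$ such that whenever $x\in\Lambda_k$ and $y\in M$ satisfy $d(f^n x,f^n y)<\varepsilon_k$ for every $n\in\mathbb{Z}$, then $y=x$. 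The argument writes the small displacement along the continuous splitting $E^s(x)\oplus E^u(x)$ and uses the Pesin rates along the orbit to force the $E^s$-component to contract under forward iterates and the $E^u$-component to contract under backward iterates, absorbing the $e^{\varepsilon|m|}$ factor from Definition~\ref{def1}.

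\emph{Step 2 (Partition-entropy reduction).} Fix $\mu\in\mathcal{M}_{inv}(\Lambda)$, a sequence $\mu_n\to\mu$ in $\mathcal{M}_{inv}(\Lambda)$, and $\eta>0$. Choose $k$ with $\mu(\Lambda_k)>1-\eta$, and pick a finite Borel partition $\mathcal{P}$ of $M$ with $\mu$-null boundary and $\operatorname{diam}(\mathcal{P})<\varepsilon_k/2$. Since $\mathcal{P}^{(m)}:=\bigvee_{j=0}^{m-1}f^{-j}\mathcal{P}$ then also has atoms with $\mu$-null boundary, the functional $\nu\mapsto\tfrac{1}{m}H_\nu(\mathcal{P}^{(m)})$ is weak-$*$ continuous at $\mu$. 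Taking the infimum over $m$, the map $\nu\mapsto h_\nu(f,\mathcal{P})$ is upper semi-continuous at $\mu$, giving
\[
\limsup_{n\to\infty} h_{\mu_n}(f,\mathcal{P})\leq h_\mu(f,\mathcal{P})\leq h_\mu(f).
\]

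\emph{Step 3 (Tail-entropy control).} It remains to bound $h_{\mu_n}(f)-h_{\mu_n}(f,\mathcal{P})$ uniformly in $n$, and to make this gap small with $\eta$. Decompose $\mu_n=\mu_n|_{\Lambda_k}+\mu_n|_{\Lambda\setminus\Lambda_k}$. On $\Lambda_k$, the uniform expansiveness of Step~1 together with $\operatorname{diam}(\mathcal{P})<\varepsilon_k$ forces the conditional tail entropy to vanish, since distinct orbit segments lying in a common atom of $\mathcal{P}^{(m)}$ for all $m$ would have to coincide. On the residual part, the $\mu_n$-mass is controlled by $\eta$ for $n$ large (via a localization argument around $\Lambda_k$), and a Rokhlin-type inequality bounds its contribution by $\eta\cdot h_{top}(f)+O(\eta\log(1/\eta))$. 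Combining with Step~2 and letting $\eta\to 0$ yields $\limsup_n h_{\mu_n}(f)\leq h_\mu(f)$.

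The main obstacle I anticipate is the measure-localization in Step~3: weak-$*$ convergence and $\Lambda_k$ closed yield only $\limsup_n \mu_n(\Lambda_k)\leq\mu(\Lambda_k)$, not a useful lower bound, so the $\mu_n$-mass off $\Lambda_k$ may concentrate on higher blocks $\Lambda_{k'}$ where the expansiveness scale $\varepsilon_{k'}$ is smaller than $\varepsilon_k$. The domination assumption is precisely what lets one circumvent this: the plaque size $\rho$ and the angle lower bound are controlled by the domination constant, uniformly over $\overline{\Lambda}$, so expansiveness only degrades through the $e^{\varepsilon k}$ prefactor in Definition~\ref{def1}, which is absorbable by choosing $\varepsilon$ small relative to $\lambda_s$ and $\lambda_u$. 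This uniformity is exactly what fails in the $C^{1+\alpha}$ setting without domination, and is the mechanism behind the authors' subsequent counterexample.
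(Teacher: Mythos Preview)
Your Step~2 is fine and matches the paper. The gap is in Step~3, and you have correctly identified it yourself: you cannot control $\mu_n(\Lambda\setminus\Lambda_k)$ by weak-$*$ convergence, since $\Lambda_k$ is closed and the inequality goes the wrong way. Your proposed repair, that domination makes the expansiveness scale degrade ``only through the $e^{\varepsilon k}$ prefactor \ldots\ which is absorbable by choosing $\varepsilon$ small,'' does not work as stated: $\varepsilon$ is a fixed parameter of the Pesin set $\Lambda(\lambda_s,\lambda_u;\varepsilon)$, and for any fixed $\varepsilon>0$ one has $e^{\varepsilon k}\to\infty$. More importantly, even if you had a single expansiveness scale $r$ valid on all of $\Lambda$ (so that $B_{\pm\infty}(x,r)=\{x\}$ for every $x\in\Lambda$), that controls $h^*_{loc}$, not the local entropy $h_{loc}(x,r,f)$ of the forward Bowen balls, which is what actually bounds $h_\nu(f)-h_\nu(f,\xi)$.

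The paper's mechanism is different from your block decomposition and is worth internalizing. Rather than work with the closed blocks $\Lambda_k$, they exploit domination to make the splitting \emph{continuous}, so that the set
\[
U^\varepsilon_N=\Big\{x\in\Lambda:\ \tfrac{1}{N}\log\|Df^{-N}|_{E^u(x)}\|<-\lambda_u+\varepsilon\Big\}
\]
is \emph{open} in $\Lambda$. Since $\mu(U^\varepsilon_N)>1-\sigma$ for some $N$, openness gives $\nu(U^\varepsilon_N)>1-\sigma$ for every $\nu$ in a weak-$*$ neighborhood of $\mu$. A Birkhoff-type lemma then produces, for each such $\nu$, a set $T=T(\nu)$ with $\nu(T)>1-\eta$ on which orbits visit $U^\varepsilon_N$ with frequency $>1-\gamma$; a Pliss argument upgrades this to genuine hyperbolic times, yielding $B_{\pm\infty}(x,r)=\{x\}$ for all $x\in T$ with a single $r>0$ independent of $\nu$. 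Finally, a separate counting argument (their Proposition~3.6) passes from this pointwise expansiveness to $h_{loc}(x,r,f)\leq\tau/2$ on a set of $\nu$-measure $>1-\eta$, which is what plugs into the integral inequality $h_\nu(f)-h_\nu(f,\xi)\leq\int h_{loc}\,d\nu$. The crux is that domination is used not to uniformize the Pesin blocks, but to produce an \emph{open} good set whose measure is lower semicontinuous in $\nu$; your scheme based on the closed filtration $\Lambda_k$ cannot recover this without essentially reproducing that step.
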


\vbox{}Lack of domination may cause no upper semi-continuity of
entropy map for $C^r$ diffeomorphism for any $2\leq r<\infty$ by a
version of  Downarowicz-Newhouse example \cite{New2}. This a little
goes against a common intuition that  conclusions are usually
parallel between $C^{1+\domination}$ systems  and $C^{1+\alpha}$
systems (see for instance, \cite{ABC}\cite{ST}). Moreover, recall that the upper semi-continuity of entropy map is  obtained for $C^1$ diffeomorphisms away from tangencies in \cite{Liao}. However, due to the nonuniformity of hyperbolicity of $(f, \Lambda)$ in Theorem \ref{thm1},   the system $(f,\overline{\Lambda})$  may be approximated by ones having homoclinic  tangencies of some periodic points whose index different from $\dim E^s$, see  for example in section 6.4 of \cite{BV}, where the closure of the Pesin set $\overline{\Lambda}=M$ and $\Lambda$ supports a hyperbolic SRB measure.

In section 2, we recall some definitions and basic facts about
entropy, and give two lemmas  needed to prove Theorem \ref{thm1}. In
section 3, we will prove Theorem \ref{thm1}. By using a class of
$C^{r}$ $(2\leq r <\infty)$ diffeomorphisms studied in \cite{New2}
we illustrate in section 4 that  entropy map could be not upper
semi-continuous for  nonuniformly hyperbolic systems  without
domination.

\section{Preliminaries }\setlength{\parindent}{2em}

Let $M$ be a compact metric space and $f$  a continuous map on $M$.
Let $\mu$ be an $f$-invariant probability measure and
$\xi=\{B_{1},\cdots B_{k}\}$ a finite partition of $M$ into
measurable sets. The entropy of $\xi$ with respect to $\mu$ is
$$H_{\mu}(f,\xi)=-\sum\limits_{i=1}^{k}\mu(B_{i})\log\mu(B_{i}).$$
The entropy of $f$ with respect to $\mu$ and $\xi$ is given by
$$h_{\mu}(f,\xi)=\lim\limits_{n\rightarrow\infty}\frac{1}{n}H_{\mu}(f,\xi^{n})=\inf _{n}\frac{1}{n}H_{\mu}(f,\xi^{n})$$
\noindent where $\xi^{n}=\bigvee\limits_{i=0}^{n-1} f^{-i}\xi$. The
entropy of $f$ with respect to $\mu$ is given by
$$ h_{\mu}(f)=\sup\limits_{\xi} h_{\mu}(f,\xi)$$ \noindent where
$\xi$ is taken over all finite partitions of $M$. A
partition $\alpha=\{A_{0},A_{1},\cdots,A_{k}\}$ is called a compact
partition if $A_{1},\cdots,A_{k}$ are disjoint compact subsets and
$A_{0}=M\backslash \bigcup _{1\leq i\leq k}A_i.$ It is clear  that $
h_{\mu}(f)=\sup\limits_{\alpha} h_{\mu}(f,\alpha)$ , where $\alpha$
is taken over all finite compact partitions of $M$.

Let $F$ be a subset of $M.$  A set $E \subseteq M $ is called a
$(n,\delta)$-spanning set of $F\subseteq M$ with respect to $f$ if $\forall \ x\in F, \
\exists \ y\in E $ such that $d(f^{i}(x),f^{i}(y))\leq \delta, \
0\leq i<n$. Denote $r_{n}(F,\delta,f)$ the minimal cardinality of
sets which $(n,\delta)$-spans $F$ with respect to $f$. Denote
$r(F,\delta,f)=\limsup\limits_{n\rightarrow +\infty}\dfrac{1}{n}\log
r_{n}(F,\delta,f)$  and the topological entropy of $f$ on $F$ is
defined by$$ h_{top}(f,F)=\lim\limits_{\delta\rightarrow
0}r(F,\delta,f) .$$ \noindent In particular, the topological entropy
of $f$ on $M$,  $h_{top}(f,M)$, is denoted  by $h_{top}(f)$.

For each $x\in M$, $n\in \mathbb{N}$,   $r\in \mathbb{R}^+$,  denote
$B_{n}(x,r,f)=\{y\in M: d(f^{i}(x),f^{i}(y))\leq r ,0\leq i <n\}$, and
$ B_{+\infty}(x,r,f)=\{y\in M: d(f^{i}(x),f^{i}(y))\leq r ,i\geq0 \}$.
When $f$ is a homeomorphism, one may also define $B_{\pm
n}(x,r,f)=\{y\in M: d(f^{i}(x),f^{i}(y))\leq r ,-n<i<n\}$ and $
B_{\pm\infty}(x,r,f)=\{y\in M: d(f^{i}(x),f^{i}(y))\leq r ,i\in
\mathbb{Z}\}$. Denote
$$h^{*}_{loc}(x,r,f)=h(f,B_{\pm\infty}(x,r,f)).$$ Further let
$$h_{loc}(x,r,f)=\lim\limits_{\delta\rightarrow
0}\limsup\limits_{n\rightarrow +\infty}\dfrac{1}{n}\log
r_{n}(B_{n}(x,r,f),\delta,f).$$ It's obvious that
$h^{*}_{loc}(x,r,f)\leq h_{loc}(x,r,f)$.

\begin{Lem}\label{lem3}
Let $M$ be a compact Riemannian manifold and $f: M\to M$ be a
diffeomorphism preserving a measure $\mu\in \cM_{inv}(M) $. Then
$$h_{\mu}(f)-h_{\mu}(f, \xi)\leq \int h_{loc}(x,r,f) d\mu(x)$$
\n for any finite partition $\xi$ with $diam(\xi) \leq r.$

\end{Lem}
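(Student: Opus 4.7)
The plan is to exhaust $h_\mu(f) = \sup_\eta h_\mu(f,\eta)$ by a sequence of fine finite partitions $\eta$, and to compare each $h_\mu(f,\eta)$ to $h_\mu(f,\xi)$ via a conditional entropy estimate tied to the local spanning numbers defining $h_{loc}$. For any finite partition $\eta$, writing $(\eta\vee\xi)^n = \eta^n\vee\xi^n$ and using $H_\mu(\alpha\vee\beta) = H_\mu(\beta) + H_\mu(\alpha\mid\beta)$ together with $h_\mu(f,\eta) \leq h_\mu(f,\eta\vee\xi)$ yields
$$h_\mu(f,\eta) \leq h_\mu(f,\xi) + \limsup_{n\to\infty}\frac{1}{n}H_\mu(\eta^n\mid\xi^n).$$
It therefore suffices to bound this last term, for fine enough $\eta$, by $\int h_{loc}(x,r,f)\,d\mu(x)$ up to an error vanishing with the fineness of $\eta$.

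By the concavity of $-t\log t$,
$$H_\mu(\eta^n\mid\xi^n) \leq \int \log N_n(x)\,d\mu(x),$$
where $N_n(x)$ counts the $\eta^n$-atoms meeting the $\xi^n$-atom through $x$. Because $\diam(\xi)\leq r$, one has $\xi^n(x)\subseteq B_n(x,r,f)$, so $N_n(x)$ is at most the number of $\eta^n$-atoms meeting the Bowen ball $B_n(x,r,f)$. Choose $\eta$ with $\diam(\eta)<\delta$ and let $E$ be a minimal $(n,\delta)$-spanning set of $B_n(x,r,f)$. Every $\eta^n$-atom meeting $B_n(x,r,f)$ is encoded by a nearest element $e\in E$ together with a sequence recording, at each time $0\leq i<n$, which $\eta$-atom meets the $\delta$-ball around $f^i e$. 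If $K$ is a uniform bound on the number of $\eta$-atoms meeting any $\delta$-ball, this coding argument gives
$$N_n(x)\leq r_n(B_n(x,r,f),\delta,f)\cdot K^n.$$

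Dividing by $n$, taking $\limsup_n$, and applying the reverse Fatou lemma (the integrands are dominated by $\tfrac{1}{n}\log r_n(M,\delta,f)+\log K$, which is uniformly bounded) gives the target inequality up to the additive term $\log K$. To absorb this error, I would choose a refining sequence $\eta_j$ on $M$ with $\diam(\eta_j)\to 0$ and with the $\mu$-measure of the $\delta_j$-fattened boundary $\partial^{\delta_j}\eta_j$ smaller than $\varepsilon_j\downarrow 0$, obtained by a Rokhlin/Lusin-type regularization on the compact manifold $M$. By the Birkhoff ergodic theorem applied to $\mathbf 1_{\partial^{\delta_j}\eta_j}$, for $\mu$-typical $x$ only an $\varepsilon_j$-fraction of the times $0\leq i<n$ can contribute the multiplicity factor $K$, so the correction in $\tfrac{1}{n}\log N_n(x)$ becomes $O(\varepsilon_j\log K)\to 0$. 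Letting $\delta\to 0$ in the spanning numbers produces $h_{loc}(x,r,f)$ pointwise, and passing to the supremum over $j$ recovers $h_\mu(f)$ on the left.

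The principal technical obstacle is precisely this multiplicity correction: the crude estimate $N_n\leq r_n\cdot K^n$ overcounts by an exponential factor $K^n$, and eliminating it requires the partitions $\eta_j$ to have small-measure $\delta_j$-fattened boundaries together with a pointwise ergodic argument ensuring that typical orbits avoid these fattenings. The remaining ingredients—the concavity bound on $H_\mu(\eta^n\mid\xi^n)$, the reverse Fatou lemma interchange, and the supremum over generating partitions—are standard.
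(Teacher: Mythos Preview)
Your conditional-entropy setup and the counting bound $N_n(x)\leq r_n(B_n(x,r,f),\delta,f)\cdot K^n$ are correct, but the mechanism you propose to kill the $K^n$ factor does not work. The multiplicity $K$ at time $i$ is governed by whether the \emph{spanning point} $e$ has $f^i e\in\partial^{\delta}\eta$, not by whether $f^i x$ is near $\partial\eta$. Even if you take $E\subseteq B_n(x,r,f)$, you only know $d(f^i e,f^i x)\leq r$, so $f^i e\in\partial^{\delta}\eta$ implies merely $f^i x\in\partial^{r+\delta}\eta$. Your Birkhoff argument would thus need $\mu(\partial^{r+\delta}\eta_j)$ small; but $r>0$ is \emph{fixed} (it is the scale in $h_{loc}(x,r,f)$), and once $\diam(\eta_j)\ll r$ the set $\partial^{r+\delta}\eta_j$ has measure close to $1$. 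So the ergodic-theorem correction gives no improvement, and you are stuck with an unremovable $\log K$ term.

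The paper sidesteps this with two devices you did not use. First, it takes the exhausting partitions to be \emph{compact} partitions $\alpha=\{A_0,A_1,\dots,A_k\}$ (with $A_1,\dots,A_k$ pairwise disjoint compact sets and $A_0$ their complement); then any ball of radius $\delta_0=\tfrac12\min_{i\neq j}d(A_i,A_j)$ meets at most two atoms of $\alpha$, so the multiplicity constant is $K=2$ \emph{uniformly}, independent of any orbit. Second, it runs the whole estimate for $f^m$ against $\xi^m$, obtaining $h_\mu(f^m,\alpha)-h_\mu(f^m,\xi^m)\leq m\int h_{loc}(x,r,f)\,d\mu+\log 2$; dividing by $m$, taking the supremum over $\alpha$, and using $h_\mu(f^m,\xi^m)=m\,h_\mu(f,\xi)$ and $h_\mu(f^m)=m\,h_\mu(f)$ turns the residual $\log 2$ into $(\log 2)/m\to 0$. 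Your Fatou/domination step is fine and is also what the paper uses; the missing ingredients are precisely the compact-partition trick and the $f^m$ renormalization.
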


\begin{proof}\,\,  For a given compact partition
$\alpha=\{A_{0},A_{1},\cdots,A_{k}\},$ where  $A_{1},\cdots,A_{k}$
are disjoint compact subsets and $A_{0}=M\backslash \bigcup _{1\leq
i\leq k}A_i,$ let $$\delta_{0} = \dfrac{1}{2}\min\Big \{d(A_{i},
A_{j}),1\leq i, j\leq k, i\neq j \Big \}.$$  For  $m\in\mathbb{N} $
take $\delta_{1}\in(0,\delta_{0})$ such that $ d(x,y)<\delta_{1}$
implies that $d(f^{i}(x),f^{i}(y))<\delta_{0},\,\, 0\leq i<m.$
Denote $\alpha ^{n}_{f^{m}}=\bigvee\limits_{i=0}^{n-1}
f^{-mi}\alpha.$ Then
\begin{eqnarray*}
& &\dfrac{1}{n} H_{\mu}(\alpha ^{n}_{f^{m}})-\dfrac{1}{n}
H_{\mu}((\xi^{m})^{n}_{f^{m}})\,\,\,\,\,\,\,\,\,(\text{where }\,\,
\xi^{m}=\bigvee\limits_{i=0}^{m-1} f^{-i}\xi)  \\[2mm]
&\leq&\dfrac{1}{n}H_{\mu}(\alpha^{n}_{f^{m}}|(\xi^{m})^{n}_{f^{m}}) \\[2mm]
&=&-\frac{1}{n}\sum\limits_{B\in (\xi^{m})^{n}_{f^{m}}}\mu(B)\sum
\limits_{A\in\alpha^{n}_{f^{m}}}\mu_{B}(A)\log \mu_{B}(A),\ \ \ \
(\text{where}\ \ \mu_{B}(A)=\dfrac{\mu(A)}{\mu({B})})  \\
&\leq &\frac{1}{n}\sum\limits_{B\in (\xi^{m})^{n}_{f^{m}}}\mu(B)\log
N_{B}(\alpha ^{n}_{f^{m}})\ \ \ \ \ \ \ (2.1)
\end {eqnarray*}

\n where $N_{B}(\alpha ^{n}_{f^{m}})=\sharp\{A\in\alpha
^{n}_{f^{m}}:A\cap B\neq\varnothing\}.$

Let $E$ be a $(n,\delta_0)$-spanning set of $B$ with respect to
$f^m$ with minimal cardinality. For every $y\in E,$ by the choice of
$\delta_0,$ the number of elements of $\alpha ^{n}_{f^{m}}$ which
intersect with $B_n(y,\delta_0,f^m)$ could not exceed $2^n.$ Since
$diam(\xi)<r,$ $B\subseteq B_{mn}(x,r)$ for any given $x\in B.$ From
these  we get
\begin{eqnarray*}
N_{B}(\alpha ^{n}_{f^{m}})&\leq& r_{n}(B,\delta_{0},f^{m})\cdot
2^{n} \\
 &\leq& r_{n}(B_{mn}(x,r),\delta_{0},f^{m})\cdot 2^{n}\\
 &\leq& r_{mn}(B_{mn}(x,r),\delta_{1},f)\cdot 2^n
\end{eqnarray*}
\n for any point $x\in B.$ Thus by (2.1) we get
\begin{eqnarray*}
& &\dfrac{1}{n} H_{\mu}(\alpha ^{n}_{f^{m}})-\dfrac{1}{n}
H_{\mu}((\xi^{m})^{n}_{f^{m}})\\
&\leq& \dfrac{m}{mn}\sum\limits_{B\in (\xi^{m})^{n}_{f^{m}}}\int_{B}\log r_{mn}(B_{mn}(x,r),\delta_{1},f)d\mu(x)+\log2 \\
&=& m\int\dfrac{1}{mn}\log
r_{mn}(B_{mn}(x,r),\delta_{1},f)d\mu(x)+\log2.
\end{eqnarray*}
When $n$ is large enough, $\dfrac{1}{mn}\log
r_{mn}(B_{mn}(x,r),\delta_{1},f)$ is less than or equal to
$h_{top}(f)$, which is a finite number for a diffeomorphism on a
compact manifold. Applying  Fatou Lemma we have
\begin{eqnarray*}
& &h_{\mu}(f^{m},\alpha)-h_{\mu}(f^{m},\xi^{m})\\[2mm]
&=&\lim\limits_{n\rightarrow+\infty}\Big(\frac{1}{n} H_{\mu}(\alpha ^{n}_{f^{m}})-\frac{1}{n} H_{\mu}((\xi^{m})^{n}_{f^{m}})\Big)\\[2mm]
&\leq& m\cdot\limsup\limits_{n\rightarrow+\infty}\int\frac{1}{mn}\log r_{mn}(B_{mn}(x,r),\delta_{1},f)d\mu(x)+\log2\\[2mm]
&\leq&m\int\limsup\limits_{n\rightarrow+\infty}\frac{1}{mn}\log r_{mn}(B_{mn}(x,r),\delta_{1},f)d\mu(x)+\log 2\\[2mm]
&\leq& m\int h_{loc}(x,r,f)d\mu(x) +\log2
\end{eqnarray*}
\n for any compact partition $\alpha$ and any $ m\in \mathbb{N}$.
Note that $h_{\mu}(f^{m},\xi^{m})=mh_{\mu}(f,\xi)$, $\forall m\in
\mathbb{N}$. It follows that
$$h_{\mu}(f)-h_{\mu}(f,\xi)\leq \int h_{loc}(x,r,f) d\mu(x).$$

\end{proof}

\begin{Rem}\, The  concept of local entropy
originates from Bowen \cite{Bowen}, where it is used to bound the
difference between metric entropy  and the metric entropy  with
respect to a partition with small diameter. In \cite{Bowen}, the
right-hand side of the inequality is
$$\lim\limits_{\delta\rightarrow 0}\limsup\limits_{n\rightarrow
+\infty}\dfrac{1}{n}\sup\limits_{x\in M}\log
r_{n}(B_{n}(x,r,f),\delta,f),\eqno(2.2)$$
 which is
called the local entropy of $f$. It is obvious that this quantity is
not smaller than $\sup\limits_{x\in M} h_{loc}(x,r,f)$ and thus
$\displaystyle{ \int h_{loc}(x,r,f)d\mu(x)}.$  The quantity
$\displaystyle{\int h_{loc}(x,r,f)d\mu(x)},$ which could be called
local entropy of $(f, \mu),$ is slightly different from
(2.2).  This quantity enables us to deal with local entropy
in an open set(thus a measurable set) which has large measure for
any invariant measure $\nu$ near $\mu$. The hyperbolicity assumption
of measures $\nu$ guarantees ``uniform hyperbolicity"(average along
the orbit) in large measure sets (see Proposition \ref{prop1}) and
thus small local entropy of $(f, \nu).$ In this way we can control
the difference between metric entropy and the metric entropy with
respect to a partition with small diameter for all nearby $\nu$ in
Proposition \ref{claim2}, which is a necessary step to prove Theorem
\ref{thm1}.

\end{Rem}

For a   continuous map $f$ on a compact metric space $M$, a measure
$\nu\in \cM_{inv}(M)$ and a Borel set $A$, by Birkhoff Ergodic
Theorem,  the set of points for which the limit of
$\dfrac{1}{n}\sum\limits^{n-1}_{i=0}\chi_{A}(f^{i}(x))$ exists is
measurable and has measure 1.
\begin{Lem}\label{lem1} For any given $0<\gamma<1,\,\,0<\eta<1 $, there exists $\sigma=\dfrac{1}{2}\gamma\eta$
such that for any measure $\nu\in \cM_{inv}(M)$ and any Borel set
$A$ with $\nu(A)>1-\sigma$ we have
$$\nu\{x:\bar f _{A}(x)>1-\gamma\}>1-\frac{1}{2}\eta,$$
\noindent where $ \bar f
_{A}(x)=\underset{n\rightarrow\infty}\lim\dfrac{1}{n}\sum\limits^{n-1}_{i=0}\chi_{A}(f^{i}(x))$
whenever exists.
\end{Lem}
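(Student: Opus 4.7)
The plan is a direct application of Birkhoff's Ergodic Theorem followed by a Markov-type inequality; no deeper input is needed. First I would invoke Birkhoff's theorem for the bounded measurable function $\chi_A$ against the invariant measure $\nu$: this yields that the time average $\bar f_A(x)$ exists on a $\nu$-full measure set, satisfies $0\le \bar f_A\le 1$ pointwise there, and integrates to $\nu(A)$.

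Second, I would introduce the ``bad'' set $B=\{x:\bar f_A(x)\le 1-\gamma\}$, taken inside the full-measure subset on which $\bar f_A$ is defined. Splitting $\int \bar f_A\,d\nu$ over $B$ and its complement and using the pointwise bound $\bar f_A\le 1$ gives
$$\nu(A)=\int \bar f_A\,d\nu \le (1-\gamma)\nu(B)+\nu(M\setminus B)=1-\gamma\,\nu(B).$$
Combined with the hypothesis $\nu(A)>1-\sigma=1-\tfrac{1}{2}\gamma\eta$, this forces $\nu(B)<\sigma/\gamma=\tfrac{1}{2}\eta$, which rearranges to the claimed inequality $\nu\{x:\bar f_A(x)>1-\gamma\}>1-\tfrac{1}{2}\eta$.

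No real obstacle is anticipated. The argument is elementary, does not require $\nu$ to be ergodic (so the ergodic decomposition is unnecessary), and in fact any $\sigma<\gamma\eta$ would do; the factor $\tfrac{1}{2}$ appears just to make the conclusion a strict inequality with room to spare, which is convenient when the lemma is later applied to measures lying in a weak-$*$ neighbourhood.
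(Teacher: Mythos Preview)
Your argument is correct and essentially identical to the paper's own proof: both invoke Birkhoff's ergodic theorem to equate $\nu(A)$ with $\int \bar f_A\,d\nu$, split the integral over $\{\bar f_A>1-\gamma\}$ and its complement, use the trivial bounds $\bar f_A\le 1$ and $\bar f_A\le 1-\gamma$ on the respective pieces, and solve the resulting inequality. The only cosmetic difference is that the paper works with the ``good'' set $E=\{\bar f_A>1-\gamma\}$ while you work with its complement $B$.
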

\proof
By Birkhoff Ergodic Theorem,  $\displaystyle{\int\chi_{A}d\nu(x)}=\displaystyle{\int\bar
f_{A}(x)d\nu(x)}.$ Let $E=\{x:\bar f_{A}(x)>1-\gamma\},$ then
$$\nu(A)=\int_{E}\bar f_{A}(x)d\nu(x)+\int_{M\setminus E}\bar
f_{A}(x)d\nu(x)\leq\nu(E)+(1-\gamma)(1-\nu(E)).$$ Choose
$\sigma=\frac{1}{2}\gamma\eta$, then $\nu(A)>1-\sigma$ implies that
$\nu(E)>1-\frac{1}{2}\eta$. \qed

\begin{Rem} Lemma \ref{lem1} is also true for $f^{-1}.$ \end{Rem}

\section{Proof of Theorem \ref{thm1}}\setlength{\parindent}{2em}

In this section, we prepare several lemmas and propositions and then
prove Theorem \ref{thm1}.

Recall that $f: M\to M$ is a $C^1$ diffeomorphism, which has a Pesin
set $\Lambda=\Lambda(\lambda_s, \lambda_u; \varepsilon)$ with a
dominated splitting $E^s(x)\oplus E^u(x),\,x\in \Lambda$.

\begin{Prop}\label{prop1}  Given  $\mu\in \cM_{inv}(\Lambda)$ and $0<\eta <1$ there exist $\rho>0$ and $L>0$
with the following property. For any $\nu\in B_{\rho}(\mu)\cap
\cM_{inv}(\Lambda)$ there exists a measurable set $T$ with
$\nu(T)>1-\eta$ such that

$$\underset{k\rightarrow+\infty}{\lim}\dfrac{1}{k}\sum\limits^{k}_{i=1}\dfrac{1}{L}\log\b\|Df^{-L}\b|_{E^{u}(f^{iL}(x))}\b\|
<-\lambda_{u}+2\varepsilon, \eqno(1)$$
$$\underset{k\rightarrow+\infty}{\lim}\dfrac{1}{k}\sum\limits^{k}_{i=1}\dfrac{1}{L}\log\b\|Df^{L}\b|_{E^{s}(f^{-iL}(x))}\b\|
<-\lambda_{s}+2\varepsilon\eqno(2)$$

\noindent for any $x\in T$, where $B_{\rho}(\mu)$ denotes the set of
all $f$-invariant measures centered at $\mu$ with radius $\rho.$
\end{Prop}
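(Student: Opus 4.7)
The plan is to establish the two limits (1) and (2) separately by the same template and intersect the resulting sets; I focus on (1), since (2) is strictly analogous with $f^{-L}$ in place of $f^L$, $E^s$ in place of $E^u$, and Pesin bound (a) in place of (b).

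The cornerstone is the observation that the domination hypothesis forces the splitting $E^s\oplus E^u$ to extend continuously to $\overline\Lambda$. Consequently, for each fixed $L\in\mathbb N$, the function $\phi_L(x):=\tfrac{1}{L}\log\|Df^{-L}|_{E^u(x)}\|$ is continuous on $\overline\Lambda$ and extends, by Tietze's theorem, to a continuous function $\tilde\phi_L$ on $M$. This continuity is what allows measure estimates about $\mu$ to be transferred to nearby invariant measures $\nu$ via weak-$*$ convergence. Moreover, Pesin condition (b) with $m=0$ gives $\|Df^{-n}|_{E^u(x)}\|\le e^{\varepsilon k}e^{-(\lambda_u-\varepsilon)n}$ for $x\in\Lambda_k$, so Kingman's subadditive ergodic theorem yields $\phi_L(x)\to\bar\phi(x)\le -(\lambda_u-\varepsilon)$ pointwise for $\mu$-a.e.\ $x$ as $L\to\infty$.

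Let $C:=\log\sup_{y\in M}\|Df^{-1}(y)\|$, a uniform upper bound for $\phi_L$ on $M$ (any $L$) by submultiplicativity of the operator norm. Choose $\gamma\in(0,1)$ small enough that $(1-\gamma)(-\lambda_u+\tfrac{3\varepsilon}{2})+\gamma C<-\lambda_u+2\varepsilon$, and set $\sigma:=\gamma\eta/4$. By dominated convergence (the $\phi_L$ are uniformly bounded and converge $\mu$-a.e.\ to $\bar\phi<-\lambda_u+\tfrac{5\varepsilon}{4}$), pick $L$ large enough that $\mu(\{\phi_L<-\lambda_u+\tfrac{5\varepsilon}{4}\})>1-\sigma/2$. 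The open set $U:=\{y\in M:\tilde\phi_L(y)<-\lambda_u+\tfrac{3\varepsilon}{2}\}$ then contains $\{\phi_L<-\lambda_u+\tfrac{5\varepsilon}{4}\}\cap\Lambda$, so $\mu(U)>1-\sigma/2$; lower semi-continuity of $\nu\mapsto\nu(U)$ on open sets supplies $\rho>0$ with $\nu(U)>1-\sigma$ for every $\nu\in B_\rho(\mu)\cap\mathcal M_{inv}(\Lambda)$.

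Applying Lemma \ref{lem1} to $f^L$ and $A=U$ (with parameters $\gamma$ and $\eta/2$) yields a set $T_u$ of $\nu$-measure $>1-\eta/4$ on which the asymptotic $f^L$-frequency of visits to $U$ exceeds $1-\gamma$. On $T_u$ intersected with the full-measure Birkhoff convergence set for $\phi_L$ under $f^L$, splitting the sum in (1) over visits and non-visits to $U$, and using that $\phi_L(f^{iL}x)=\tilde\phi_L(f^{iL}x)<-\lambda_u+\tfrac{3\varepsilon}{2}$ whenever $f^{iL}x\in U$ (recall $f^{iL}x\in\Lambda$ since $x\in\Lambda$) while $\phi_L\le C$ otherwise, the limit in (1) is bounded by $(1-\gamma)(-\lambda_u+\tfrac{3\varepsilon}{2})+\gamma C<-\lambda_u+2\varepsilon$. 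The same argument applied to $\varphi_L(y):=\tfrac{1}{L}\log\|Df^L|_{E^s(y)}\|$ and $f^{-L}$ (Lemma \ref{lem1} also holding for $f^{-1}$ by the Remark) produces a set $T_s$ with $\nu(T_s)>1-\eta/4$ on which (2) holds. Taking $T:=T_u\cap T_s$ and $\rho$ the minimum of the two radii then gives $\nu(T)>1-\eta$. The principal difficulty is that the pointwise Lyapunov bound from the Pesin set holds only $\mu$-a.e.\ and cannot be transferred to a general $\nu$ near $\mu$ directly by Markov/Chebyshev, which supplies only integral information; converting the Lyapunov estimate into an \emph{open} super-level set statement---which is exactly what domination makes possible, through continuity of the splitting---allows weak-$*$ closeness to transfer the measure lower bound from $\mu$ to $\nu$, and Lemma \ref{lem1} then upgrades that measure bound into the uniform orbit-average bound called for in (1) and (2).
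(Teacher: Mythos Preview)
Your argument is correct and shares the paper's core strategy---domination yields continuity of the splitting, so the Lyapunov-type sublevel set is open, the measure bound passes from $\mu$ to nearby $\nu$ by weak-$*$ lower semicontinuity on open sets, and Lemma~\ref{lem1} upgrades this to an orbit-frequency bound---but the execution differs in a way worth noting. The paper first fixes an integer $N$ from the Pesin filtration so that the open set $U_N^\varepsilon=\{x:\tfrac1N\log\|Df^{-N}|_{E^u(x)}\|<-\lambda_u+\varepsilon\}$ has large $\mu$-measure, applies Lemma~\ref{lem1} to $f$ itself (not to any power), and only afterward chooses $L$ as an explicit multiple of $N$; it then performs a block decomposition of each orbit segment $[(i-1)L,iL]$ into $N$-steps landing in $U_N^\varepsilon$ versus single bad steps, and sums these contributions to obtain~(1). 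You instead pick $L$ first via the a.e.\ convergence $\phi_L\to\bar\phi\le -\lambda_u+\varepsilon$ (Kingman is convenient here, though the Pesin bound~(b) alone already forces $\mu(\{\phi_L<-\lambda_u+\tfrac{5\varepsilon}{4}\})\to1$, so the subadditive theorem is not strictly needed), apply Lemma~\ref{lem1} directly to $f^L$, and read~(1) off as a weighted average over visits and non-visits to $U$---no block combinatorics required. Your route is shorter and conceptually cleaner; the paper's route, by tracking $N$ and $L$ separately, produces the explicit relation $L=2\max\{[\tfrac{2N\log c}{\varepsilon}],[\tfrac{2N'\log c}{\varepsilon}]\}$, which may be of use when quantitative control on $L$ is wanted.
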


\begin{proof}\,\,
 For an integer $n\geq 1$ set
$$A^{\varepsilon}_{n}=\Big\{x\in
\Lambda:\,\,\dfrac{1}{m}\log\b\|Df^{-m}\b|_{E^{u}(x)}\b\|<-\lambda_{u}+\varepsilon,
\,\,\forall m\geq n\Big\}.$$  Then $A^{\varepsilon}_{1}\subset\cdots
\subset A^{\varepsilon}_{n}\subset A^{\varepsilon}_{n+1}\subset
\cdots,$ and
$\mu(\bigcup\limits_{n=1}^{\infty}A^{\varepsilon}_{n})=1.$ Let
$$c=\max\Big\{\sup\limits_{x\in
\Lambda}\b\|Df^{-1}\b|_{E^{u}(x)}\b\|,\,\,\sup\limits_{x\in
\Lambda}\b\|Df\b|_{E^{s}(x)}\b\|,\,\,2\Big\}$$ and let $0<\eta<1$ be
given in the condition of the proposition.  Take
$\gamma<\dfrac{\varepsilon}{2\log c}$ with $0<\gamma<1$ and take
$\sigma=\dfrac12 \gamma \eta$ as in Lemma \ref{lem1}. Clearly
$0<\sigma<1.$ Take $ N $ such that
$\mu(\bigcup\limits_{n=1}^{N}A^{\varepsilon}_{n})>1-\sigma$. Let
$$U^{\varepsilon}_{N}=\Big\{x\in
\Lambda:\,\,\dfrac{1}{N}\log\b\|Df^{-N}\b|_{E^{u}(x)}\b\|<-\lambda_{u}+\varepsilon\Big\},$$
then $\bigcup\limits_{n=1}^{N}A^{\varepsilon}_{n}\subseteq
U^{\varepsilon}_{N}.$ Since  $U^{\varepsilon}_{N}$ is open,
$\nu(U^{\varepsilon}_{N})>1-\sigma$ for any $\nu\in
\cM_{inv}(\Lambda)$ close enough to $\mu$. Denote $ \bar f
_{U^{\varepsilon}_{N}}(x)=\underset{n\rightarrow\infty}\lim\dfrac{1}{n}
\sum\limits^{n-1}_{i=0}\chi_{U^{\varepsilon}_{N}}(f^{i}(x))$
whenever exists and $\widetilde{U^{\varepsilon}_{N}}=\Big\{x\in
\Lambda: \bar f_{U^{\varepsilon}_{N}}(x)>1-\gamma\Big\}$. By Lemma
\ref{lem1}, we get that
$\nu(\widetilde{U^{\varepsilon}_{N}})>1-\dfrac{1}{2}\eta$.

Following the same procedure for $f^{-1}$ and
$\dfrac{1}{n}\log\b\|Df^{n}\b|_{E^{s}(x)}\b\|$, we get $N^{\prime}$
and thus $\widetilde{(U^{\varepsilon}_{N^{\prime}})}^{{\prime}}$
with
$\nu(\widetilde{(U^{\varepsilon}_{N^{\prime}})}^{{\prime}})>1-\dfrac{1}{2}\eta$
for $\nu\in \cM_{inv}(\Lambda)$ close to $\mu$. Then we get a set
$T=\widetilde{U^{\varepsilon}_{N}}\bigcap\widetilde{(U^{\varepsilon}_{N^{\prime}})}^{{\prime}}$
and a constant $\rho>0$ such that  $\nu(T)>1-\eta$  for any $\nu\in
B_\rho(\mu)\cap \cM_{inv}(\Lambda).$ Let
$L=2\max\Big\{[\dfrac{2N\log
c}{\varepsilon}],[\dfrac{2N^{\prime}\log c}{\varepsilon}]\Big\}$.

For $x\in T$ and $i\in \mathbb{Z^{+}}$, choose a sequence of
integers $\{n^{i}_{j}\}^{\ell+1}_{j=1},$
$$(i-1)L=n^{i}_{\ell+1}<n^{i}_{\ell}<n^{i}_{\ell-1}<\cdots<n^{i}_{1}=iL$$ by the
following procedure
\begin{equation*}n^{i}_{j+1}=
\begin{cases}n^{i}_{j}-N,&\text{$n^{i}_{j}\geq(i-1)L+N$ and
$f^{n^{i}_{j}}(x)\in
U^{\varepsilon}_{N}$}\\n^{i}_{j}-1,&\text{otherwise}.
\end{cases}
\end{equation*}
\noindent where $1\leq j\leq\ell.$ Write
$\{n^{i}_{1},\cdots,n^{i}_{\ell}\}$ as a disjoint union
$A_{i}\bigcup B_{i}\bigcup C_{i}$, where
\begin{eqnarray*}
&A_{i}&=\b\{n^{i}_{j}\geq(i-1)L+N,\,\
 \text{and }  f^{n^{i}_{j}}(x)\in
U^{\varepsilon}_{N}\b\},\\
&B_{i}&=\b\{n^{i}_{j}\geq(i-1)L+N \,\,\text{ and } \,\,
f^{n^{i}_{j}}(x)\notin U^{\varepsilon}_{N}\b\},\\
&C_{i}&=\b\{(i-1)L<n^{i}_{j}<(i-1)L + N\b\}.
\end{eqnarray*}
 It's obvious that
$0\leq\sharp{C_{i}}< N$ , $0\leq\sharp{A_{i}}\leq[\dfrac{L}{N}]$.
Thus,
\begin{eqnarray*}
& &\log\b\|Df^{-L}\b|_{E^{u}(f^{iL}(x))}\b\| \\[2mm]
&\leq&\sum\limits^{\ell}_{j=1}\log\b\|Df^{-(n^{i}_{j}-n^{i}_{j+1})}\b|_{E^{u}(f^{n^{i}_{j}}(x))}\b\|\\[2mm]
&\leq& N(-\lambda_{u}+\varepsilon)\cdot\sharp A_{i} + \log
c\cdot\sharp B_{i} + \log c\cdot\sharp C_{i} \\[2mm]
&<&(-\lambda_{u}+ \varepsilon)L +\log c \cdot (N+\sharp B_{i}).
\end{eqnarray*}
By the definition of $\widetilde{U^{\varepsilon}_{N}}$, for any $k$
large enough, $\sum\limits^{k}_{i=1}\sharp B_{i}\leq kL\gamma$.
 Therefore,
\begin{eqnarray*}
&
&\dfrac{1}{k}\sum\limits^{k}_{i=1}\dfrac{1}{L}\log\b\|Df^{-L}\b|_{E^{u}(f^{iL}(x))}\b\|\\[2mm]
 &\leq& \dfrac{1}{kL}(kL\cdot (-\lambda_{u}+ \varepsilon) + \log c \cdot Nk\ + \log c \cdot
 kL\gamma)\\[2mm]
 &<& (-\lambda_{u}+ \varepsilon) + \frac{1}{2} \varepsilon + \frac{1}{2} \varepsilon< -\lambda_{u}+ 2\varepsilon.
\end{eqnarray*}
Hence,  $$
\underset{k\rightarrow+\infty}{\lim}\dfrac{1}{k}\sum\limits^{k}_{i=1}\dfrac{1}{L}\log\b\|Df^{-L}\b|_{E^{u}(f^{iL}(x))}\b\|<-\lambda_{u}+2\varepsilon
, \ \ \forall x\in T$$ and we get (1).

Replace $f$ and $E^{u}(x) $ by $f^{-1}$ and $E^{s}(x)$ respectively,
we get (2) analogously.
\end{proof}

The following lemma  comes from Burns and Wilkinson \cite{Burn
Amie}, which uses locally invariant fake foliations to avoid the
assumption of dynamical coherence, a construction that goes back  to
Hirch, Pugh, and Shub \cite{HPS}. Given a foliations $ \mathcal {F}$
and a point $y$ in domain, we denote $ \mathcal {F}$$(y) $ the leaf
through $y$ and by $ \mathcal {F}$$(y,\rho) $ we mean the
neighborhood of radius $\rho>0$ around $y$ inside the leaf.

\begin{Lem}\label{lem2}
Let $f: M\to M$ be a $C^1$ diffeomorphism. Assume that $\Delta$ is
an $f$-invariant compact set and the tangent space of which admits
f-dominated splitting: $T_{x}(M)=E^{s}(x)\oplus E^{u}(x), \forall
x\in \Delta$ . Let angles between $f$-invariant subbundles $E^{s}$
and $E^{u}$ are bounded from zero by $\theta$ for every $x\in
\Delta$. Then for any $0< \zeta< \dfrac{\theta}{4}$, $\exists
\\ \rho>r_{0}>0$, for any $x\in \Delta$, the neighborhood $B(x,\rho)$
admits foliations $ \mathcal {F}^{s}_{x}$ and  $\mathcal
{F}^{u}_{x}$, such that for any $y\in B(x,r_{0})$ and $*\in
\{s,u\}$,

(1) almost tangency: leaf $\mathcal {F}^{*}_{x}(y)$ is $C^{1}$ and
$T_{y}\mathcal {F}^{*}_{x}(y)$ lies in a cone of width $\zeta$ of
$E^{*}(x)$,

(2) local invariance: $f\mathcal {F}^{*}_{x}(y,r_{0})\subseteq
\mathcal {F}^{*}_{fx}(fy)$   , $f^{-1}\mathcal
{F}^{*}_{x}(y,r_{0})\subseteq \mathcal {F}^{*}_{f^{-1}x}(f^{-1}y)$.

\end{Lem}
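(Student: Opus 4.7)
The plan is to construct the fake foliations locally in each exponential chart by a graph transform argument, following the Hirsch--Pugh--Shub scheme as adapted by Burns--Wilkinson. The key point is that, while $f$ may fail to be dynamically coherent and the true invariant foliations need not exist globally, the strong consequences of a dominated splitting (continuity of the bundles, uniform cone condition, exponential contraction/expansion in a dominated sense) are enough to produce locally invariant foliations in a chart at each $x\in\Delta$.

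First I would extend the splitting $E^{s}\oplus E^{u}$ continuously from $\Delta$ to a small tubular neighborhood $U\supset\Delta$, preserving the cone structure up to loss $o(1)$; this is possible because $E^{s}$ and $E^{u}$ are continuous on the compact set $\Delta$ and the angles are bounded below by $\theta$. Then, for each $x\in\Delta$, I would pull the dynamics back via the exponential chart and define $\tilde f_{x}=\exp_{fx}^{-1}\circ f\circ\exp_{x}\colon T_{x}M\to T_{fx}M$, which is a $C^{1}$ map with $\tilde f_{x}(0)=0$ and derivative $Df_{x}$ at the origin. Using a bump function, I modify $\tilde f_{x}$ outside a ball of radius $\sim\rho$ so that it agrees with the linear map $Df_{x}$ far from the origin, producing a global map $\hat f_{x}\colon T_{x}M\to T_{fx}M$ that is $C^{0}$--close to $Df_{x}$ everywhere. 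For $\rho$ small enough, $\hat f_{x}$ preserves narrow cones of width $\zeta$ around $E^{s}(x)$ and $E^{u}(x)$, uniformly in $x$.

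Next I would produce the leaves by a graph transform. Consider the space of $C^{1}$ graphs $y\mapsto(y,\sigma(y))$ from $E^{u}(x)$ to $E^{s}(x)$ whose derivative lies in a cone of width $\zeta$ around the zero section; equip this space with a suitable $C^{0}$ norm. The image of such a graph under $\hat f_{x}$ is again, using the cone invariance and the domination inequality $\|Df|_{E^{s}}\|\cdot\|Df^{-1}|_{E^{u}}\|\le \tfrac12$ (inherited from $\Delta$ up to small error after the modification), the graph over $E^{u}(fx)$ of a map of the same type, and the induced graph transform is a uniform contraction. Its unique fixed point, together with graphs through translates $y\in B(x,r_{0})$, foliates a neighborhood of $0\in T_{x}M$ by pseudo-unstable leaves; the $C^{1}$ regularity of the leaves follows from a standard fibered contraction (section) argument applied to the tangent bundle. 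Pushing forward by $\exp_{x}$ gives $\cF^{u}_{x}$ on $B(x,\rho)$, and the analogous construction applied to $f^{-1}$ gives $\cF^{s}_{x}$. Almost tangency \emph{(1)} is built into the cone space; local invariance \emph{(2)} holds because on $B(x,r_{0})$ the modified map $\hat f_{x}$ coincides with the true $\tilde f_{x}$, provided $r_{0}\ll\rho$ is chosen so that small leaves through points of $B(x,r_{0})$ do not leave the region where $\hat f_{x}=\tilde f_{x}$ under one iterate.

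The main obstacle is making all estimates uniform in $x\in\Delta$. This requires (i) a uniform lower bound $\theta$ on angles so the cone construction works at every $x$, (ii) uniform control on the modulus of continuity of $Df$ on a compact neighborhood, so that on scale $\rho$ the map $\tilde f_{x}$ is close enough to its linearization for the contraction constants of the graph transform to be uniform, and (iii) uniform choice of the bump function in the modification so that $\hat f_{x}$ depends continuously on $x$. Once these uniformities are secured, a single pair $(\rho,r_{0})$ works for all $x\in\Delta$, and conclusions \emph{(1)} and \emph{(2)} of the lemma follow.
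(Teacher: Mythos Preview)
The paper does not give its own proof of this lemma: it simply quotes the result from Burns--Wilkinson, noting that the underlying construction is Hirsch--Pugh--Shub. Your proposal is a correct outline of precisely that Burns--Wilkinson fake-foliation construction (exponential charts, bump-function modification to a globally near-linear map, cone-preservation from domination, graph transform / HPS to produce the locally invariant foliations, and uniformity from compactness of $\Delta$ and the angle bound $\theta$), so you are reconstructing exactly the argument the paper cites rather than offering an alternative route.
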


By Lemma \ref{lem2} (1) one can define local product structure on
the $r$-neighborhoods of every $x\in \Delta$, for a small $r>0,$ as
used in \cite{Liao}.

For $y,z\in B(x,\rho)$, write $[y,z]_{s,u}=a$ if $\mathcal
{F}^{s}_{x}(y)$ intersects $\mathcal {F}^{u}_{x}(z)$ at $a\in
B(x,\rho)$. By transversality of (1), the intersection point $a$ is
unique whenever it exists. We could find $r_{1}\in [0,r_{0}]$
independent of $x$ such that $[y,z]_{s,u}$ is well defined whenever
$y,z\in B(x,r_{1}),$ and for any $y\in B(x,r_{1})$ there exists
$y_{*}\in \mathcal {F}^{*}_{x}(x)$ such that
$[y_{s},y_{u}]_{u,s}=y$. Lemma \ref{lem2} implies that the locally
invariant foliations $\mathcal {F}^{*}_{x}$ are transverse with
angles uniformly bounded from below. Therefore, $\exists \,\,\ell>0$
independent of $x$ such that for any $y\in B(x,r^{\p})$ we have
$y_{*}\in \mathcal {F}^{*}_{x}(x,\ell r^{\p})$ for $\ell
r^{\p}<r_{1}$. Furthermore, by locally invariance of foliations we
get that $y\in B_{\pm2}(x,r^{\p})$ implies that
$f^{\pm1}(y_{*})=(f^{\pm1}y)_{*},$ where recall that
$B_{\pm2}(x,r^{\p})=\{y\in M: d(f^{i}y, f^{i}x)\leq r^{\p},-2<
i<2\}$.

Also note that $y_{s\backslash u}=x$ for $y\in B(x,r^{\p})$ implies
that $y\in\mathcal{F}^{u\backslash s}_{x}(x,\ell r^{\p})$, therefore
$y_{s}=y_{u}=x$ implies that $y=x$ for $y\in B(x,r^{\p}).$

Since there exists domination on the Pesin set
$\Lambda=\Lambda(\lambda_s, \lambda_u; \varepsilon), $ we could
extend it to the closure of $\Lambda,$ then the process above could
be done in $\overline{\Lambda}.$ Therefore, we get $r^{\p}$
independent of $x$ in $\Lambda$ such that  $y\in B(x,r^{\p})$
implies that $y=x$.

\begin{Lem}[Pliss\cite{Pliss}]\label{lempliss}
Let $a_{*}\leq c_{2}<c_{1}$ and
$\theta=\dfrac{c_{1}-c_{2}}{c_{1}-a_{*}}.$  For given real numbers
$a_{1},\cdots, a_{N}$ with $\sum\limits_{i=1}^{N}a_{i}\leq c_{2}N $
and $a_{i}\geq a_{*}$ for every $i$,  there exists $\ell\geq
N\theta$ and $1\leq n_{1}<n_{2}<\cdots < n_{\ell}\leq N $, such that
$$\sum\limits_{i=k+1}^{n_{j}}a_i\leq c_{1}(n_{j}-k) , \quad\quad0\leq k<n_{j}, \ \ 1\leq j\leq \ell.$$
\end{Lem}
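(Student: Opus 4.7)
The plan is to reduce the statement to counting the record indices of the cumulative sums of the shifted sequence $c_1-a_i$. Setting $b_i:=c_1-a_i$ and $S_0:=0$, $S_n:=\sum_{i=1}^{n}b_i$, the hypothesis $\sum_{i=1}^{N}a_i\leq c_2 N$ becomes $S_N\geq N(c_1-c_2)$, while $a_i\geq a_*$ becomes $b_i\leq c_1-a_*$. The conclusion $\sum_{i=k+1}^{n_j}a_i\leq c_1(n_j-k)$ for every $0\leq k<n_j$ rewrites as $S_{n_j}\geq S_k$ for every such $k$; that is, $n_j$ must be a (weak) record of the finite sequence $(S_n)_{0\leq n\leq N}$. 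The task thus becomes: show that $(S_n)$ admits at least $N\theta$ such records.

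For this I would track the running maximum $M_n:=\max_{0\leq k\leq n}S_k$, which is nondecreasing with $M_0=0$. Call $n$ a \emph{strict} record if $S_n>M_{n-1}$; at such an $n$ one has $M_n-M_{n-1}=S_n-M_{n-1}\leq S_n-S_{n-1}=b_n\leq c_1-a_*$, while $M_n-M_{n-1}=0$ otherwise. Telescoping over $n=1,\dots,N$ gives $M_N\leq \tilde\ell\,(c_1-a_*)$, where $\tilde\ell$ is the number of strict records. Combining this with $M_N\geq S_N\geq N(c_1-c_2)$ and using $c_1-a_*>0$ (which follows from $a_*\leq c_2<c_1$) yields
$$\tilde\ell\;\geq\;\frac{N(c_1-c_2)}{c_1-a_*}\;=\;N\theta.$$
Since every strict record is in particular a weak record, the set of weak records has cardinality $\ell\geq \tilde\ell\geq N\theta$, and listing them in increasing order produces the desired indices $n_1<\cdots<n_\ell$.

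The argument is classical and essentially combinatorial, so I expect no serious obstacle. The only subtlety lies in the possibility that $S_n=M_{n-1}$ at a weak record (contributing nothing to $M_N-M_{n-1}$), which is precisely why the counting is carried out through strict records and then transferred to weak ones, the latter being the class genuinely needed for the partial-sum inequality.
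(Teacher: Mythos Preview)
Your argument is correct. The paper does not give its own proof of this lemma; it merely states it with a citation to Pliss, so there is nothing to compare against. The approach you outline --- passing to $b_i=c_1-a_i$, interpreting the desired indices as record times of the partial sums $S_n$, and bounding the number of strict records via the telescoping identity $M_N=\sum_n(M_n-M_{n-1})\leq\tilde\ell\,(c_1-a_*)$ together with $M_N\geq S_N\geq N(c_1-c_2)$ --- is the standard proof of the Pliss lemma and goes through without difficulty.
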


\vbox{}By (1) of Proposition \ref{prop1} , for every $x\in T$ and
every $k $ large enough,
$$\sum_{i=1}^{k}\log\b\|Df^{-L}\b|_{E^{u}(f^{iL}(x))}\b\|\leq (-\lambda_{u}+2\varepsilon)Lk.$$
Take $a_{*}=\inf \limits_{x\in \Lambda }\{\log \| Df^{-L}
\mid_{E^{u}(x)}\| \}, \,\, c_{1}=(-\lambda_{u}+2\varepsilon)L, \,\,
c_{2}=(-\lambda_{u}+3\varepsilon )L.$  Applying Lemma \ref{lempliss}
it is easy to find an infinite sequence
$$1\leq n_{1}<n_{2}<\cdots<n_{j}<\cdots$$ such that

$$\sum_{i=k+1}^{n_{j}}\log\b\|Df^{-L}\b|_{E^{u}(f^{iL}(x))}\b\|\leq (-\lambda_{u}+3\varepsilon)(n_{j}-k)L,$$
$0\leq k<n_{j},\,\, j=1,2 \cdots. $

Choose $r^{\p\p}>0$ and $\zeta >0$ such that
$\dfrac{\b\|D_{y}f^{-L}v\b\|}{\b\|D_{x}f^{-L}u\b\|} < e^{\varepsilon
L}$  and $\dfrac{\b\|D_{y}f^{L}v\b\|}{\b\|D_{x}f^{L}u\b\|} < e^{
\varepsilon L}$  for $d(x,y)<r^{\p\p}$, \
$\angle(u,v)\leq\dfrac{\zeta}{2}, \ \|u\|=\|v\|=1$.

Take $r=\min \Big\{{r^{\p}},{r^{\p\p}}\Big\}$, then
$$f^{-(n_{j}-k)L}\mathcal{F}^{u}_{f^{n_{j}L}x}(f^{n_{j}L}x,\ell
r)\subseteq\mathcal{F}^{u}_{f^{kL}x}(f^{kL}x,e^{(-\lambda_{u}+4\varepsilon)(n_{j}-k)L}\ell
r)$$ for $0\leq k<n_{j}, j=1,2 \cdots.$ In particular,
$$f^{-n_{j}L}\mathcal{F}^{u}_{f^{n_{j}L}x}(f^{n_{j}L}x,\ell r)
\subseteq\mathcal{F}^{u}_{x}(x,e^{(-\lambda_{u}+4\varepsilon)Ln_{j}}\ell
r),\quad j=1,2 \cdots.$$

For $y\in B_{+\infty}(x,r)$, $f^{i}(y_{u})=(f^{i}y)_{u}$, $\forall i
\in  \mathbb{N}$, thus $y_{u}=f^{-n_{j}L}(f^{n_{j}L}y)_{u}\in
\mathcal{F}^{u}_{x}(x,e^{(-\lambda_{u}+4\varepsilon)Ln_{j}}\ell r)$,
 $\forall j\in \mathbb{N}$. Therefore $y_{u}$ belongs to the intersection of all
 $\mathcal{F}^{u}_{x}(x,e^{(-\lambda_{u}+4\varepsilon)Ln_{j}}\ell r)$ over all $j$ which reduces to $\{x\}$.
Analogously, for $y\in B_{-\infty}(x,r)$, we get that $y_{s}=x$.
Thus $y\in B_{\pm\infty}(x,r)$ implies that  $y=x$.

To conclude, we have obtained the following :
\begin{Cla}\label{claim1}
For any $\sigma> 0 $ there exist $r>0$ and $\rho>0 $ satisfying the
following property. For any $\nu\in B_{\rho}(\mu)\cap\mathcal
M_{inv}(\Lambda)$, there exists a  Borel set $T$ with
$\nu(T)>1-\sigma$ such that
$$ B_{\pm\infty}(x,r)=\{x\},\quad\forall x\in T   .$$
\end{Cla}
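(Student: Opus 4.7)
The plan is to combine Proposition \ref{prop1} with the Pliss Lemma and the fake foliations from Lemma \ref{lem2} to force the bilateral Bowen balls to collapse to singletons. First, I apply Proposition \ref{prop1} with $\sigma$ in the role of $\eta$ to obtain $\rho>0$, an integer $L$, and, for each $\nu\in B_{\rho}(\mu)\cap\cM_{inv}(\Lambda)$, a set $T$ with $\nu(T)>1-\sigma$ on which the block-$L$ time averages of $\log\b\|Df^{-L}\b|_{E^{u}}\b\|$ along the forward orbit are at most $-\lambda_{u}+2\vep$, and symmetrically for $\log\b\|Df^{L}\b|_{E^{s}}\b\|$ along the backward orbit.

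On such $T$ the Pliss Lemma converts this average control into a sparse but uniform sequence of good times: for each $x\in T$ I extract $1\leq n_{1}<n_{2}<\cdots$ with
\[
\sum_{i=k+1}^{n_{j}}\log\b\|Df^{-L}\b|_{E^{u}(f^{iL}x)}\b\|\leq (-\lambda_{u}+3\vep)(n_{j}-k)L,\qquad 0\leq k<n_{j}.
\]
Lemma \ref{lem2} supplies locally invariant fake foliations $\cF^{s}_{x},\cF^{u}_{x}$ tangent to narrow cones around $E^{s}(x)\oplus E^{u}(x)$; their transversality gives a well-defined local product $[y,z]_{s,u}$ on small neighborhoods with projections $y\mapsto(y_{s},y_{u})$ satisfying $f^{\pm 1}(y_{*})=(f^{\pm 1}y)_{*}$ under $B_{\pm 2}$-orbit control, and such that $y_{s}=y_{u}=x$ forces $y=x$. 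I then shrink the scale to $r=\min\{r^{\p},r^{\p\p}\}$, where $r^{\p\p}$ is chosen so that derivative distortion within $r$-balls costs at most a factor $e^{\vep L}$ per iterate $f^{\pm L}$.

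Combining the Pliss estimate with this distortion bound and local invariance of the fake foliations yields, at every Pliss time $n_{j}$,
\[
f^{-n_{j}L}\cF^{u}_{f^{n_{j}L}x}(f^{n_{j}L}x,\ell r)\subseteq\cF^{u}_{x}\b(x,e^{(-\lambda_{u}+4\vep)Ln_{j}}\ell r\b).
\]
For $y\in B_{+\infty}(x,r)$ local invariance gives $y_{u}=f^{-n_{j}L}((f^{n_{j}L}y)_{u})$, so $y_{u}$ lies in the above shrinking unstable arc for every $j$; since $n_{j}\to\infty$ the intersection reduces to $\{x\}$, hence $y_{u}=x$. The symmetric argument applied to $f^{-1}$ and $E^{s}$ shows $y_{s}=x$ for $y\in B_{-\infty}(x,r)$. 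Together, any $y\in B_{\pm\infty}(x,r)$ satisfies $y_{s}=y_{u}=x$, and the local product structure forces $y=x$, which is the claim.

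The main technical obstacle is the bookkeeping of constants: one must choose the cone width $\zeta$, the distortion radius $r^{\p\p}$, the Pliss gap $(c_{1}-c_{2})L=\vep L$, and the parameter $N$ coming from the Egorov-style approximation in Proposition \ref{prop1} in the right order, so that the exponential contraction $e^{(-\lambda_{u}+4\vep)Ln_{j}}$ genuinely beats the initial scale $\ell r$. This is where the Pesin-set hypothesis $\vep\ll\lambda_{u},\lambda_{s}$ is essential; the domination hypothesis enters only through Lemma \ref{lem2}, which is what makes the whole plan work on the non-invariant Pesin set (as opposed to an individual Oseledec basin).
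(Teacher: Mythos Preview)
Your proposal is correct and follows essentially the same route as the paper: invoke Proposition~\ref{prop1} with $\eta=\sigma$ to get $T$, use the Pliss Lemma (with gap $\vep L$) to produce hyperbolic times $n_{j}$ along the forward orbit, feed these into the fake-foliation machinery of Lemma~\ref{lem2} together with the distortion scale $r^{\p\p}$ to obtain the leaf contraction $f^{-n_{j}L}\cF^{u}_{f^{n_{j}L}x}(f^{n_{j}L}x,\ell r)\subseteq\cF^{u}_{x}(x,e^{(-\lambda_{u}+4\vep)Ln_{j}}\ell r)$, and conclude $y_{u}=x$ (respectively $y_{s}=x$) via the local product structure. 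This is exactly the argument the paper assembles in the paragraphs preceding the statement of the claim.
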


Claim \ref{claim1} says that, fixing a small $r>0$, for  $\nu$ close
to $\mu$, $h^{*}_{loc}(x,r,f)=0$  on  a set with large
$\nu$-measure. To estimate the difference between $h_{\mu}(f)$ and
$h_{\mu}(f,\xi)$,  by Lemma \ref{lem3} we need to deal with
$h_{loc}(x,r,f)$. One always has  that $h^{*}_{loc}(x,r,f)\leq
h_{loc}(x,r,f)$ but the inverse inequality is generally not true.
However, we are going to show that,  $h_{loc}(x,r,f)$ is still small
on a set with large measure. Combining Claim \ref{claim1}
 with Lemma \ref{lem3} we aim to  deduce the following proposition.

\begin{Prop}\label{claim2}
Let $\mu\in \cM_{inv}(\Lambda) $ and  $\tau >0.$  There exist $ r>0$
and $ \rho >0 $ such that
$$h_{\nu}(f)-h_{\nu}(f,\xi)\leq\tau$$  for any
$\nu\in B_{\rho}(\mu)\cap \cM_{inv}(\Lambda)$ and any finite
partition $\xi$ with $diam(\xi)\leq r.$
\end{Prop}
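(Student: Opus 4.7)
The plan is to combine Lemma \ref{lem3} with Claim \ref{claim1} (together with a small quantitative refinement of its proof) in order to control the local-entropy integral $\int h_{loc}(x,r,f)\,d\nu(x)$ uniformly for $\nu$ close to $\mu$. The key point is that on the ``good set'' $T$ furnished by Claim \ref{claim1}, $h_{loc}(x,r,f)$ will actually be shown to vanish, while on $M\setminus T$ the trivial bound $h_{loc}\le h_{top}(f)$ suffices.

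Concretely, given $\tau>0$, I would first choose $\sigma>0$ so small that $\sigma\,h_{top}(f)<\tau/2$ (note that $h_{top}(f)<\infty$ since $f$ is a diffeomorphism of a compact manifold). Applying Claim \ref{claim1} with this $\sigma$ produces $r_0,\rho>0$ and, for every $\nu\in B_{\rho}(\mu)\cap \cM_{inv}(\Lambda)$, a Borel set $T=T(\nu)$ with $\nu(T)>1-\sigma$ and $B_{\pm\infty}(x,r_0)=\{x\}$ for every $x\in T$. For any finite partition $\xi$ with $\mathrm{diam}(\xi)\le r\le r_0$, Lemma \ref{lem3} yields
$$h_{\nu}(f)-h_{\nu}(f,\xi)\le \int_{T} h_{loc}(x,r,f)\,d\nu(x)+\int_{M\setminus T}h_{loc}(x,r,f)\,d\nu(x),$$
and the second summand is already bounded by $\sigma\,h_{top}(f)<\tau/2$.

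The heart of the argument is to prove that $h_{loc}(x,r,f)=0$ for every $x\in T$. Claim \ref{claim1} alone only kills the \emph{bi-infinite} quantity $h^{*}_{loc}$, so I would revisit its proof and extract a quantitative version. By the Pliss argument applied to (1) of Proposition \ref{prop1} together with the fake product structure of Lemma \ref{lem2}, for each $y\in B_{n}(x,r,f)$ the unstable coordinate $y_{u}$ lies inside $\mathcal{F}^{u}_{x}(x,Ce^{-cn}r)$, with constants $c,C>0$ depending only on $\lambda_{u}$ and $\varepsilon$ and uniform in $x\in T$. In the stable direction, a forward-time analog of Proposition \ref{prop1} is required: from the Pesin bound $\|Df^{n}|_{E^{s}}\|\le Ce^{-(\lambda_{s}-\varepsilon)n}$ on $\Lambda$ together with the subadditive ergodic theorem one obtains $\int \tfrac{1}{L}\log\|Df^{L}|_{E^{s}}\|\,d\mu<-\lambda_{s}+2\varepsilon$ for $L$ large, and the proof of Proposition \ref{prop1} re-run with forward times then yields Pliss times at which $\mathcal{F}^{s}_{x}$ contracts along the forward orbit of $x$. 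Consequently, a $\delta$-ball inside $\mathcal{F}^{s}_{x}(x,\cdot)$ remains within $\delta$ under the first $n$ iterates, so $B_{n}(x,r,f)$ can be $(n,\delta)$-spanned by $O((r/\delta)^{\dim M})$ points independently of $n$, giving $h_{loc}(x,r,f)=0$.

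The main obstacle is precisely this last step: passing from the bi-infinite estimate of Claim \ref{claim1} (which only controls $h^{*}_{loc}$) to a one-sided estimate strong enough to control $h_{loc}$ forces one to complement the backward-time Pliss on $E^{s}$ in Proposition \ref{prop1}(2) with a forward-time analog, and then to invoke the local invariance of the fake foliations provided by Lemma \ref{lem2} to propagate contraction of $\mathcal{F}^{s}_{x}$ along the orbit of $x$ uniformly in $x\in T$. Once this is in hand, combining the two integrals gives $h_{\nu}(f)-h_{\nu}(f,\xi)<\tau/2<\tau$, as required.
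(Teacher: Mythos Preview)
Your overall strategy---bound $\int h_{loc}(x,r,f)\,d\nu$ via Lemma~\ref{lem3}, splitting into a good set and its complement---matches the paper's, but the heart of your argument contains a genuine gap.

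You claim that $h_{loc}(x,r,f)=0$ for every $x\in T$, and support this with the assertion that ``a $\delta$-ball inside $\mathcal{F}^{s}_{x}(x,\cdot)$ remains within $\delta$ under the first $n$ iterates.'' This is false in the nonuniformly hyperbolic setting, even with domination: domination only controls the ratio $\|Df|_{E^s}\|/m(Df|_{E^u})$, not $\|Df|_{E^s}\|$ itself, which may well exceed $1$ along long stretches of orbit. Pliss times do not rescue this: a forward Pliss time $n_j$ for $E^s$ guarantees contraction from any earlier time \emph{up to} $n_j$, but says nothing about intermediate times between consecutive Pliss times, where the stable ball can expand well beyond $\delta$ and destroy the $(n,\delta)$-spanning property. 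The same obstruction applies to your unstable argument: knowing that $y_u$ is small at time $0$ does not prevent large excursions of $(f^iy)_u$ away from $(f^ix)_u$ at non-Pliss times $i$. So the conclusion $r_n(B_n(x,r,f),\delta)=O(1)$ is not established.

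The paper does not attempt to prove $h_{loc}=0$ on $T$; it proves only $h_{loc}(x,r,f)\le\tau/2$ on a different large-measure set $W'$, and by a different mechanism. From $B_{\pm\infty}(x,r)=\{x\}$ one extracts, for each $x\in T$ and each $\beta>0$, integers $m(x),N(x)$ and a neighborhood $V(x)$ with $r_{m(x)}(B_{\pm N(x)}(y,r),\delta/4)\le e^{\beta m(x)}$ for all $y\in V(x)$. One covers $T$ by finitely many such neighborhoods to get an open set $W$, applies Lemma~\ref{lem1} to pass to the set $W'$ of points whose forward orbit spends a $(1-\gamma)$-fraction of time in $W$, and then for $x\in W'$ decomposes $[0,n)$ into return blocks to $W$. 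On each good block the spanning number is at most $e^{2\beta\,m_{i_j}}$; on the bad times (total length $\le\gamma n$, plus two boundary pieces of length $\le N_0$) one uses the trivial bound $C_0^{\text{length}}$. Bowen's concatenation estimate then yields $h_{loc}(x,r,f)\le 2\beta+\gamma\log C_0$, and choosing $\gamma$ and $\beta$ small gives $\le\tau/2$. This Bowen-type block decomposition, together with the second application of Lemma~\ref{lem1} to produce $W'$, is the missing ingredient in your outline.
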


\begin{proof}
Let $C_{0}=\sup\limits_{x\in
\Lambda}\{\|D_{x}f\|+1\}$ and $C=h_{top}(f,\Lambda)$. It is clear
that both of them are finite. We assume that $C>0$, otherwise the
entropy map for $f$ is upper semi-continuous and we complete the
proof. Take $\eta=\dfrac{\tau}{2C}$, $\gamma=\dfrac{\tau}{2\log
C_0}$ in Lemma \ref{lem1}, we get $\sigma=
\dfrac{\eta\gamma}{2}\leq\dfrac{\eta}{2}$. By Claim \ref{claim1} we
get $r>0$ and  $\rho>0$ with the property that for any $\nu\in
B_{\rho}(\mu)$ there exists a Borel set $T=T(\nu)$ with
$\nu(T)>1-\sigma$ such that
$$h_{loc}^{*}(x,r,f)=0,\,\,\,\,x\in T.\eqno (3.1)$$
 We could assume that $T$ is compact by the regularity
of measure.

 For any $\delta>0$, $\beta>0$ and $x\in T$, by (3.1) we
get $m(x)>0$, $N(x)>0$ as well as an open neighborhood $V(x)$ of $x$
such that $\forall y\in V(x),$ $$ r_{m(x)}(B_{\pm
N(x)}(y,r),\frac{\delta}{4})\leq e^{\beta m(x)}.$$  By compactness
of $T$, $\exists \,\,x_{1},\cdots ,x_{s}$ such that $T\subseteq
\underset{i=1,\cdots,s}\cup V(x_{i}):= W$.  Then $\nu(W)>1-\sigma.$
By Lemma\ \ref{lem1}, $\nu(\widetilde{W})>1-\frac{\eta}{2}$, where
$\widetilde{W}=\{x:\bar f _{W}(x)>1-\gamma\}$. Denote
$m_{i}=m(x_{i}),$  $N_{i}=N(x_{i})$,  $N_{0}=\max\limits_{1\leq
i\leq s}\{m_{i},N_{i}\}$  and
$W^{\prime}=f^{-N_0}(W)\cap\widetilde{W}$. Since
$\sigma<\dfrac{\eta}{2}$, we get that $\nu(W^{\prime})>1-\eta$. For
 $x\in W^{\prime}$ take $n>2N_0$ large enough such that $\sharp\{0\leq
i<n:f^{i}(x)\in W\}>(1-\gamma)n.$ We define a sequence
$0=n_{0}<n_{1}<\cdots <n_{k-1}<n_{k}=n$ of integers as follows. Let
$n_{1}=N_{0}$ then  $f^{n_{1}}(x)\in W$. Assume that $N_0\leq
n_{i}<n-N_0$ has been defined with $f^{n_{i}}(x)\in W.$   There
exists $x_{i_{j}}$ such that $f^{n_{i}}(x)\in V(x_{i_{j}})$, and
then  we take
$$n_{i+1}=\min\Big\{\min \{k:k\geq n_{i}+m_{i_{j}}, f^{k}(x)\in W\},n\Big\}.$$
If  $\exists\ i$ such that $n-N_{0}\leq n_{i+1}<n $(Case(b)), we take $k=i+2$ and $n_k=n.$
Then the sequence $\{ n_i \}_{i=0}^{k}$ is
$$\{0=n_0< n_1=N_0< \cdots<n_{k-2}< n-N_0\leq n_{k-1} <n_k=n   \}.$$
Otherwise(Case(a)),  the sequence is
$$\{0=n_0< n_1=N_0< \cdots<n_{k-1}< n-N_0 <n_k=n   \},$$
see the figure below.

\setlength{\unitlength}{0.95mm}
 \begin{center}
 \begin{figure}[h]
 \begin{picture}(180,40)

 \thicklines
 \put(10,30){\line(1,0){110}}
 \put(10,30){\line(0,1){1}}
 \put(5,25){$0=n_0$}
 \put(25,30){\line(0,1){1}}
 \put(20,25){$N_0=n_1$}
 \put(40,35){$n_1+m_{1_j}$}
 \put(60,30){\line(0,1){1}}
 \put(58,25){$n_2$}
 \put(95,30){\line(0,1){1}}
 \put(92,25){$n_{k-1}$}
 \put(105,30){\line(0,1){1}}
  \put(99,22){$n-N_0$}
 \put(106,35){$n_{k-1}+m_{(k-1)_{j}}$}

 \put(120,30){\line(0,1){1}}
 \put(119,25){$n=n_k$}
 \put(70,25){$......$}
\thinlines
\qbezier(45,30)(45.9,32)(46,34)
\qbezier(108,30)(108.9,32)(109,34)
 \qbezier(105,30)(104,26)(102,24)

 \qbezier(25,30)(35,35)(45,30)
 \qbezier[25](45,30)(53,33)(60,30)
 \qbezier(95,30)(101.5,35)(108,30)
 \qbezier[25](108,30)(114,34)(120,30)

\thicklines
 \put(10,7){\line(1,0){110}}
 \put(10,7 ){\line(0,1){1}}
 \put(5, 2){$0=n_0$}
 \put(25,7){\line(0,1){1}}
 \put(20, 2){$N_0=n_1$}
 \put(40, 12 ){$n_1+m_{1_j}$}
 \put(60,7 ){\line(0,1){1}}
 \put(58, 2){$n_2$}
 \put(92,7 ){\line(0,1){1}}
 \put(90, 2){$n_{k-2}$}

 \put(91,12.5 ){$n_{k-2}+m_{(k-2)_j}$}
 \put(105,7 ){\line(0,1){1}}

 \put(98.5,0){$n-N_0$}
 \put(110,7 ){\line(0,1){1}}
 \put(109,2){$n_{k-1}$}

 \put(120,7 ){\line(0,1){1}}
 \put(119,2 ){$n=n_k$}
 \put(70, 2){$......$}

 \thinlines
 \qbezier(45,7)(45.9,9)(46,11)
 \qbezier(102,7)(102.9,9)(103,11)
 \qbezier(105,7)(104.5,4)(103,2)

 \qbezier(25,7 )(35,12)(45,7 )
 \qbezier[25](45,7 )(53,10 )(60,7 )
 \qbezier(92,7 )(97,11)(102,7 )
 \qbezier[20](102,7 )(106,11)(110,7 )

 \put(0,38){$\bf{Case (a)}$}
 \put(0,15){$\bf{Case (b)}$}

\end{picture}

\end{figure}
\end{center}

\begin{Rem}
In Case$(a)$ in the figure, although the point $n_{k-1}+m_{(k-1)_{j}}$
is greater than  $n-N_0,$ we point out that it  could be smaller
than or equal to $n-N_0.$ Similarly, the point $n_{k-2}+m_{(k-2)_j}$
could be greater than or equal to $n-N_0$ in Case$(b),$ although it
is smaller than $n-N_0$ in the figure.
\end{Rem}

 Note that for given $ b > 0$ and  $\ell\in \mathbb{N},$ any
ball with radius $b$ could be $(\ell,b)$-spanned by $C_0^{\ell-1}$
points, where $C_{0}=\sup\limits_{x\in \Lambda}\{\|D_{x}f\|+1\}$.
When $i=0$, $n_0=0$, $f^{n_0}(B_{n}(x,r))=B_{n}(x,r)$ could be
$(N_{0},\frac{\delta}{2})$-spanned by $\kappa C_0^{N_0-1}$ points,
where $\kappa$ is  the minimal cardinality of sets which
$(1,\frac{\delta}{2})$-span $M$. When $1 \leq i \leq k-2,$ we have
$N_0 \leq n_i < n-N_{0}$ and $f^{n_{i}}(x)\in
V(x_{i_j})$ for some point $x_{i_{j}}.$ Thus
$$f^{n_{i}}(B_{n}(x,r))\subset B_{\pm
N_{i_j}}(f^{n_i}(x),r),$$ from which  $f^{n_{i}}(B_{n}(x,r))$ could
be $(m_{i_{j}},\frac{\delta}{4})$-spanned by $e^{ m_{i_{j}}\beta}$
points. Since any ball with radius $\frac{\delta}{4}$ could be
$(n_{i+1}-(n_{i}+m_{i_{j}}),\frac{\delta}{4})$-spanned by $C_0^{n_{i+1}-n_{i}-m_{i_{j}}-1}$
points, we get that $f^{n_{i}+m_{i_{j}}-1}(B_{n}(x,r))$ could be
$(n_{i+1}-(n_{i}+m_{i_{j}}),\frac{\delta}{4})$-spanned by
$e^{m_{i_{j}}\beta} C_0^{n_{i+1}-n_{i}-m_{i_{j}}-1}$ points. Thus
$f^{n_{i}}(B_{n}(x,r))$ could be
$(n_{i+1}-n_{i},\frac{\delta}{2})$-spanned by $e^{2m_{i_{j}}\beta}
C_0^{n_{i+1}-n_{i}-m_{i_{j}}-1}$ points.

When $i=k-1$ and for Case$(a)$, just as the
discuss above for $1 \leq i \leq k-2,$ $f^{n_{k-1}}(B_{n}(x,r))$
could be $(n-n_{k-1},\frac{\delta}{2})$-spanned by
$e^{2m_{(k-1)_{j}}\beta} C_0^{n-n_{k-1}-m_{(k-1)_{j}}-1}$ points.
For Case$(b)$, since $(n-n_{k-1})\leq N_0,$
$f^{n_{k-1}}(B_{n}(x,r))$
 could be $(n-n_{k-1}, \frac{\delta}{2})$-spanned by $\kappa C_0^{N_0-1}$ points.

By Lemma 2.1 of \cite{Bowen}, which says that
$r_n(B_n(x,r),\delta, f)\leq\prod\limits_{i=0}^{k-1}r_{n_{i+1}-n_i}(f^{n_i}B_n(x,r),\dfrac{\delta}{2},f),$
we get that
$$r_{n}(B_{n}(x,r),\delta)\leq
\begin{cases}
\kappa C_0^{N_0+\gamma n}e^{2\beta n},       & \text{when  Case(a),}\\[2mm]
\kappa^2 C_0^{2N_0+\gamma n}e^{2\beta n},    & \text{when Case(b)}.
\end{cases}$$
Therefore, for both cases, $\forall x\in W^{\prime}$, $
\forall\delta>0$, $r_{n}(B_{n}(x,r),\delta)\leq \kappa^2
C_0^{2N_0+\gamma n}e^{2\beta n}$ for any $n$ large enough. Thus,
\begin{eqnarray*}
& &\limsup\limits_{n\rightarrow+\infty}\frac{1}{n}\log
r_{n}(B_{n}(x,r),\delta)\\[2mm]
&\leq&
\lim\limits_{n\rightarrow+\infty}(\frac{1}{n}\log\kappa^2+2\beta+\frac{2N_0}{n}
\log C_0+\gamma\log C_0)\\[2mm]
&=&2\beta+\gamma\log C_0.
\end{eqnarray*}
By the choice of $\gamma$ and the arbitrariness of $\beta$, we get
that
$$\limsup\limits_{n\rightarrow+\infty}\frac{1}{n}\log
r_{n}(B_{n}(x,r),\delta)\leq \frac{\tau}{2},\,\,\forall x\in
W^{\prime}, \forall \delta>0.$$ Therefore,
$$h_{loc}(x,r,f)=\lim\limits_{\delta\rightarrow0}\limsup\limits_{n\rightarrow+\infty}\frac{1}{n}
\log r_{n}(B_{n}(x,r),\delta)\leq \frac{\tau}{2}, \quad\quad\forall
x\in W^{\prime}.$$ For any measurable partition $\xi$ with
$diam(\xi) \leq r$ by Lemma \ref{lem3} it holds that
$$h_{\nu}(f)-h_{\nu}(f,\xi)\leq \int h_{loc}(x,r,f) d\nu(x)$$
and thus
\begin{eqnarray*}
h_{\nu}(f)-h_{\nu}(f,\xi)&\leq& \int _{W^{\prime}}h_{loc}(x,r,f)
d\nu(x)+\int_{ M\setminus
W^{\prime}}h_{loc}(x,r,f)d\nu(x)\\[2mm]
                         &\leq& \frac{\tau}{2}+\eta\cdot C \leq\frac{\tau}{2}+\frac{\tau}{2}\leq
                         \tau.
 \end{eqnarray*}
This completes the proof of Proposition \ref{claim2}.\end{proof}

We are now turning to the proof of Theorem \ref{thm1}.\\

\noindent {\it Proof of Theorem \ref{thm1}.}~~~\,For any given
$\mu\in \cM_{inv}(\Lambda)$ we will show that the entropy map is
upper semi-continuous at $\mu.$    For $\mu$ and a real $\tau>0$, we
can choose  two constants   $ r>0, \,\,\rho>0$ as in Proposition
\ref{claim2} and  a partition $\xi$  with  $diam(\xi)<r$ and
$\mu(\partial\xi)=0$. From Proposition \ref{claim2}, we know that
$$h_{\nu}(f)-h_{\nu}(f,\xi)\leq \tau,\,\,\,\,\forall \nu\in B_{\rho}(\mu)\cap \cM_{inv}(\Lambda).$$
Since $h_{\mu}(f)=\sup\limits_{\xi}h_{\mu}(f,\xi)$, we can shrink
$diam(\xi)$  if necessary such that
$$h_{\mu}(f,\xi)-h_{\mu}(f)\leq\tau.$$ Note for a fixed $n$ and a
partition $\xi$ with $\mu(\partial\xi)=0$,
$\dfrac{1}{n}H_{\nu}(f,\xi^n)$ is continuous at $\mu$. Thus
$h_{\nu}(f,\xi)=\inf\limits_{n}\dfrac{1}{n}H_{\nu}(f,\xi^n)$  is
upper semi-continuous at $\mu.$  Shrink $\rho>0$ if necessary, we
get
$$  h_{\nu}(f,\xi)-h_{\mu}(f,\xi)\leq \tau,\,\,\,\,\nu\in \cM_{inv}(\Lambda)
\cap B_{\rho}(\mu).$$ Therefore,
\begin{eqnarray*}
h_{\nu}(f)-h_{\mu}(f)&=&    (h_{\nu}(f)-h_{\nu}(f,\xi))+(h_{\nu}(f,\xi)-h_{\mu}(f,\xi))+(h_{\mu}(f,\xi)-h_{\mu}(f))\\[2mm]
                     &\leq&\tau+\tau+\tau\\[2mm]
                     &\leq&3\tau, \,\,\,\,\nu\in \cM_{inv}(\Lambda)
                     \cap B_{\rho}(\mu),
\end{eqnarray*}
which shows that the entropy map is upper semi-continuous at $\mu.$
\qed

\section{$C^{r}$ $(2\leq r<\infty)$ diffeomorphisms without domination}\label{diffeomorphisms without domination}

In this section, by some brief analysis of the techniques in
\cite{New2} we illustrate the examples of $C^{r}$ $(2\leq r<\infty)$
nonuniformly hyperbolic system without domination for which the
entropy map is not upper semi-continuous. For a  detail proof, readers may refer to \cite{New2}.

We denote $C^{r}(M)(2\leq r<+\infty)$ as the set of $C^{r}$
diffeomorphisms on a smooth surface $M$. We can choose an open
subset $\cU \subset C^{r}(M)$ such that each $f$ in it has a
hyperbolic basic set $\Delta(f)$ with the same adapted neighborhood
$U\subset M$ which has persistent homoclinic tangencies, i.e. there
exist $x,y\in\Delta(f)$ such that $W^{s}(x)$ and $W^{u}(y)$ have
tangencies. This is  according to  Chapter 6 of \cite{PT}.
 Let
$\widetilde{H}_{n}(f)$ be the set of periodic hyperbolic points $p$
which are homoclinic related to $\Delta(f)$ (i.e.
$W^{s}(\Delta(f))\backslash \Delta(f)$ and
$W^{u}(O(p))\backslash O(p)$ have nonempty transverse
intersections and vice versa) with least period less
than or equal to $n$, and let
$\widetilde{H}(f)=\bigcup\limits_{n=1}^{+\infty}\widetilde{H}_{n}(f)$.
Let $\widetilde{\tau}(f)$ be the least integer $n$ such that
$\widetilde{H}_{n}(f)\neq\emptyset$ and let $\mathcal D_{m} $ be the
subset of $\cU$ such that $\widetilde{\tau}(f)=m$. For $p\in
\widetilde{H}(f)$, denote
$\chi(p)=\frac{1}{\pi(p)}\min\{\log|\lambda_{s}^{-1}(p)|,\log|\lambda_{u}(p)|\}$,
where $|\lambda_{s}(p)|<1$ and $|\lambda_{u}(p)|>1$ are the norms of
the two eigenvalues of $D_{p}f^{\pi(p)}$ respectively, and $\pi(p)$
 the least period of $p$. Let $\mu_{p}$ be the periodic measure
supported on $p$, i.e.
$\mu(p)=\dfrac{1}{\pi(p)}\sum\limits_{i=1}^{\pi(p)-1}\delta_{f^{i}(p)}$.
For an ergodic hyperbolic measure $\mu$ on $M$, let
$\chi(\mu)=\min\{|\chi_{s}(\mu)|, |\chi_{u}(\mu)|\}$, where
$\chi_{s}(\mu),\chi_{u}(\mu)$ are the two Lyapunov exponents of
$\mu$. In the sequel, by
$(C^{r},\epsilon)$-perturbation  we mean that the perturbation  is done in the  $\epsilon$-neighborhood in $C^{r}$ topology. By $C^{r}$
perturbation, we mean $(C^{r},\epsilon)$-perturbation for any
sufficiently small $\epsilon$.

Let $f\in \mathcal D_m,$ $n\geq m,$ for any $p\in
\widetilde{H}_{n}(f)$, we first $C^r$-perturb $f$ to get a
homoclinic tangency for $O(p)$ (see  Lemma 8.3 and Lemma 8.4 in \cite{New0}). By a
$C^r$-perturbation  we assume $p$ is both  $r-$shrinking meaning
$|\lambda_s(p)\lambda_u^r(p)|<1$  and nonresonant meaning  that for
any pair of positive integers $n$ and $m$ the number
$|\lambda_{s}^{n}(p)\lambda_{u}^{m}(p)|$ is different from 1.
 Then according to Proposition 5 and Lemma 3 in
\cite{Kaloshin} by a further $C^r$ small perturbation, one can
 get an interval $I$ of tangencies between $W^u(p)$ and $W^s(p)$. Near  this interval we
take one more $C^r$ small perturbation $g$ to create a curve
$J\subset W^u(p,g)$  with $N$ bumps as in Figure 1.

\setlength{\unitlength}{1.5mm}
 \begin{center}
 \begin{figure}[h]\label{small horseshoe figure}
 \begin{picture}(180,45)

\thicklines \put(20,10){\line(1,0){35}}
 \put(25,0){\line(0,20){25}}
 \qbezier[45](25,25)(25,40)(45,45)
 \put(23,8){$p$}

 \thinlines
 \qbezier(23,15)(24,16)(25,17)
 \qbezier(27,15)(26,16)(25,17)
 \qbezier(23,3)(24,2)(25,1)
 \qbezier(27,3)(26,2)(25,1)
 \qbezier(21,12)(22,11)(23,10)
 \qbezier(21,8)(22,9)(23,10)
 \qbezier(29,12)(28,11)(27,10)
 \qbezier(29,8)(28,9)(27,10)

 \thicklines
 \qbezier[50](45,45)(70,50)(60,12)
 \qbezier[30](50,6)(55,0)(60,12)
 \thinlines
 \qbezier(47,10)(47.5,8.5)(49.7,6.4)
 \qbezier(45,10)(46,15)(47,10)
 \qbezier(43,10)(44,5)(45,10)
 \qbezier(41,10)(42,15)(43,10)
 \qbezier(39,10)(40,5)(41,10)
 \qbezier(37,12)(38.4,13.5)(39,10)
 \qbezier[15](36,8)(36.5,12)(37,12)
 \qbezier(34,10)(34.5,8)(34.7,7.5)
 \qbezier(32,10)(33,15)(34,10)

 \qbezier(47,12)(47.5,10)(49,8)
 \qbezier(45,12)(46,15)(47,12)
 \qbezier(43,12)(44,5.7)(45,12)
 \qbezier(41,12)(42,14.5)(43,12)
 \qbezier(39,12)(40,5.7)(41,12)
 \qbezier(36.5,12.3)(38,14)(39,12)
 \qbezier[15](35.6,9)(36,12)(36.5,12.3)
 \qbezier(34,12)(34.5,9)(35,8)
 \qbezier(31.5,10)(33,15)(34,12)

 \qbezier(47,13.5)(47,13.5)(49,9)
 \qbezier(44.5,12)(45.5,16)(47,13.5)
 \qbezier(43.6,11)(43.9,7.5)(44.5,12)
 \qbezier(40.5,11.5)(42,18)(43.6,11)
 \qbezier(39.5,12)(39.8,8)(40.5,11.5)
 \qbezier(36.3,13)(38,16)(39.5,12)
 \qbezier[20](35.3,10)(35.8,12)(36.3,13)
 \qbezier(33.5,14)(34.5,12)(35,9)
 \qbezier(31.5,11)(32.5,14.5)(33.5,14)

 \put(49,8){\line(0,1){1}}
 \put(31.5,10){\line(0,1){1}}

 \put(30,10.7){\line(1,0){20}}
 \put(30,11.5){\line(1,0){20}}
 \put(30,10.7){\line(0,1){0.8}}
 \put(50,10.7){\line(0,1){0.8}}

 \put(30,9.7){\line(0,1){0.5}}
 \put(50,9.7){\line(0,1){0.5}}
 \put(29.8,8.5){$a_1$}
  \put(50,8.5){$a_2$}

 \qbezier(49,11.5)(50,12)(51,13)
 \put(51 ,14){$D_N$}
 \qbezier(37,13.5)(36.8,14)(36.6,16)
 \put(35,17){$g^{k+T}(D_N)$}

 \qbezier(37,10)(37.2,9)(37.4,8)
 \put(37,5){$I$}
 \qbezier(40,7.5)(42,5)(43,4.5)
 \put(42.5,3.5){$J$}

 \put(24.5,17.5){\line(1,0){1}}
 \put(24.5,23){\line(1,0){1}}
 \qbezier(25,19)(24,19.5)(23,21)
 \put(17,23){$g^{-T}(J)$}

 \put(26,17.5){\line(1,0){1}}
  \put(26,23){\line(1,0){1}}
  \put(26,17.5){\line(0,1){5.5}}
  \put(27,17.5){\line(0,1){5.5}}
  \qbezier(27,20)(28,20.5)(28.5,22)
  \put(28,23){$g^{k}(D_N)$}

  \thicklines
  \qbezier[20](61,28)(61,20)(59,15)
  \qbezier[20](62,29)(62,20)(60,16)
  \qbezier[3](61,28)(61.5,27.8)(62,29)
  \qbezier[3](59,15)(59.5,15.5)(60,16)

\thinlines
 \qbezier(36,42)(35,42.5)(34,45)
 \put(32,45){$W^{u}(p,g)$}
 \qbezier(26,10)(28,5)(30,3)
 \put(30,2){$W^{s}(p,g)$}

\end{picture}
 \caption{Creation of small  basic sets}
 \end{figure}
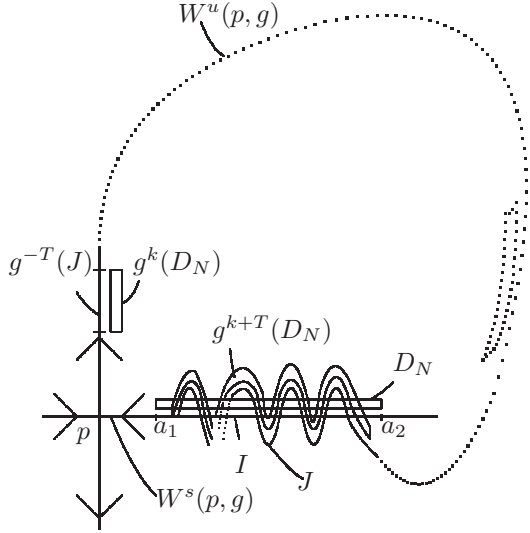
\end{center}

This perturbation can be done as follows. Denote $I=\{a_{1}\leq x
\leq a_{2},y=0 \}$, $J=\{a_{1}\leq x \leq
a_{2}:y=A\cos\omega(x-c)\}.$ To keep the perturbation to be
$C^r$-small, we only require that $A\cdot\omega^r\leq\epsilon,$
where $\epsilon$ could be arbitrarily small. For any fixed small
$\epsilon>0,$
 let $A=\epsilon(\dfrac{a_{2}-a_{1}}{\pi N})^{r}$,
 $\omega=\dfrac{\pi\ N}{a_{2}-a_{1}}$, $c=\dfrac{a_{1}+a_{2}}{2}.$
 Without loss of generality, we assume that $p$ is a fixed point.

 To create small hyperbolic basic sets,  consider a small rectangle $D_N$ close to $I$ with distance less than $\dfrac{A}{4}$ and consider the iterations of $g^{k+T}$
 (where $k$ denote the first $k$ iterations near $p$). To obtain a periodic hyperbolic basic set $\Delta(p,N)$
  by transversal intersections, it is required that
  $$A\cdot |\lambda_u|^k\gtrsim1,\quad|\lambda_s|^k\lesssim A,\eqno(4.1)$$
   where by $a\gtrsim b$
 we mean that $a\geq const\cdot b$,
 and  the $const$ is independent of $N$ and $k(N)$ ( $a\lesssim b $ is
 defined similarly), and by $a\simeq b$ we mean that $a\gtrsim b$ and $a\lesssim b.$
 In this way we get an $N$-horseshoe with topological entropy $\dfrac{\log N}{k+T}.$
Note that $A=\epsilon(\dfrac{a_{2}-a_{1}}{\pi N})^{r}\simeq \dfrac{1}{N^r},$
 so to get (4.1), $k$ should be large enough such that
 $$k\simeq\dfrac{-\log A}{\chi(p)}=
 \dfrac{\log (N^r\cdot const)}{\chi(p)}=\dfrac{r\log N}{\chi(p)}+const $$ and
 thus $$h_{top}(\Delta(p,N),g)
 =\dfrac{\log N}{\dfrac{r\log N}{\chi(p)}+const+T}\,\,.$$

 For $n\in \mathbb{N},$ choose $N(n)$ large enough such that   $$h_{top}(\Delta(p,N(n)),g)>\dfrac{\chi(p)}{r}-\dfrac{1}{n}.\,\,$$
 By Variational Principle,
  there exists an ergodic measure $\nu_n$ supported on $\Delta(p,N(n))$
  such that $h_{\nu_{n}}(g)> \dfrac{\chi(p)}{r}-\dfrac{1}{n}.$
By estimating one sees  that $Dg^{k+T}$   expands unstable direction
in $\Delta(p,N)$ about $N$ times and contracts  the stable direction
about $1/N$ times,  so $\chi(\mu_n)$ of any ergodic measure $\mu_n$
on $\Delta(p,N(n))$ will be close to $\dfrac{\log N}{k+T}
\simeq\dfrac{\chi(p)}{r}$. Moreover, since by iterations of $g$,
$\Delta(p,N)$ spends most of time near $p,$ $\mu_n$ is close to the
periodic measure $\mu_p$.  Let $N(n)$ be larger if necessary such
that $$\chi(\mu_n)> \dfrac{\chi(p)}{r}-\dfrac{1}{n}\quad
\text{and}\quad d(\mu_{n},\mu_{p})<\dfrac{1}{n}.$$ Denote
$\widetilde{\Delta}(p,n)=\Delta(p,N(n)).$

To conclude, for any diffeomorphism $f\in \mathcal D_m,$ any positive integer $n\geq m,$ and any $p\in \widetilde{H_{n}}(f),$ through any $C^r$ small perturbation we get a diffeomorphism $g_n$ satisfying property $\mathcal{S}_n$:

(1) \ there exists a hyperbolic basic  set  $\widetilde{\Delta}(p,n)$ and an  ergodic measure $\nu_{n}$ on $\widetilde{\Delta}(p,n)$
such that
$$h_{\nu_{n}}(g_n)> \dfrac{\chi(p)}{r}-\dfrac{1}{n},$$

(2) \ for any ergodic measure $\mu_{n}$ on $\widetilde{\Delta}(p,n)$, we have
$$\chi(\mu_{n})>\dfrac{\chi(p)}{r}-\dfrac{1}{n} \ \ \text{and}\ \ d(\mu_{n},\mu_{p})<\dfrac{1}{n}
    .$$

Denote $\mathcal D_{m,n}(n\geq m)$ as the subset of $\mathcal D_{m}$
satisfying property $\mathcal S_{n}.$ It's obvious that property $\mathcal{S}_n$ is an open property. From above discussion, we see that $\mathcal D_{m,n}$ is an open dense subset of $\mathcal D_m.$  Let
$$\mathcal
R=\bigcup\limits_{m=1}^{+\infty}\bigcap\limits_{n=m}^{+\infty}\mathcal
D_{m,n},$$ then $\mathcal R$ is a residual subset of $\mathcal U.$
For any $f\in\mathcal R$, any $p\in \widetilde H_{n}(f)$, there
exists a sequence of ergodic measures $\{\nu_{n}\}$ such that
$\nu_{n}\rightarrow \mu_{p}$  and
$\chi(\nu_{n})>\dfrac{1}{2r}\chi(p)$. By Definition \ref{def1},
$\{\nu_{n}\}$ and $\mu_{p}$ are supported on a Pesin set
$\Lambda(\dfrac{1}{2r}\chi(p),\dfrac{1}{2r}\chi(p);\varepsilon)$.
But at the same time, $h_{\nu_{n}}(f)\rightarrow
\dfrac{1}{r}\chi(p)$ while $h_{\mu_{p}}(f)=0$, which implies
that the entropy map of $f\in \mathcal R$ is not upper semi-continuous
at $\mu_p$ on the Pesin set
$\Lambda(\dfrac{1}{2r}\chi(p),\dfrac{1}{2r}\chi(p);\varepsilon)$.

Note that although for each fixed $n,$ $\nu_n$ is supported on $\widetilde{\Delta}(p,n)$
 which is uniformly hyperbolic and the angles between $E^s$ and $E^u$ is
  uniformly bounded below by about $A\cdot \omega\simeq\dfrac{1}{(N(n))^{r-1}},$
the angles of the Oseledec splittings for the sequence $\nu_n$,
$n\geq 1$, may be arbitrary small as $n$ goes to infinity.
 Therefore there is no domination between $E^s$ and $E^u$ over  $\Lambda(\dfrac{1}{2r}\chi(p),\dfrac{1}{2r}\chi(p);\varepsilon)$.


\end{document}